\definecolor{shadecolor}{rgb}{0.8,0.8,0.8}
\newcommand{\tr}{\mbox{tr}}
\newcommand{\spa}{\mbox{span}}
\newcommand{\Hess}{\mbox{Hess}}
\newcommand{\End}{\mbox{End}}
\newcommand{\grad}{\mbox{grad}}
\newcommand{\Id}{\mbox{Id}}
\newtheorem{theorem}{Theorem}[section]
\newtheorem{proposition}{Proposition}[section]
\newtheorem{corollary}{Corollary}[section]
\newtheorem{lemma}{Lemma}[section]
\newtheorem{Remark}{Remark}[section]
\newtheorem{example}{Example}[section]
\begin{document}
	
	\title{Normally flat submanifolds with semi-parallel Moebius second fundamental form}
	\maketitle
	\begin{center}
		\author{M. S. R. Antas}  
		\footnote{Corresponding author}
		\footnote{The author is supported by CAPES Grant 88887.133756/2025-00.\\
			Data availability statement: Not applicable.}
	\end{center}
	\date{}
	
	\begin{abstract}
	In the Moebius geometry there are two important tensors associated to an umbilic-free immersion $f:M^{n}\to \mathbb{R}^{m}$ named the Moebius metric $\langle \cdot, \cdot \rangle^{*}$ and the Moebius second fundamental form $\beta$. In \cite{semi} was introduced the class of umbilic-free Moebius semi-parallel submanifolds of the unit sphere, which means that $\bar{R}\cdot \beta=0$, where $\bar{R}$ is the van der Waerden-Bortolotti curvature operator associated to $\langle \cdot, \cdot \rangle^{*}$. In this paper, we classify umbilic-free isometric immersions $f:M^{n}\to \mathbb{R}^{m}$ with semi-parallel Moebius second fundamental form and flat normal bundle.
	\end{abstract}
	
	\noindent \emph{2020 Mathematics Subject Classification:} 53B25, 53C40.\vspace{2ex}
	
	\noindent \emph{Key words and phrases: Submanifolds with flat normal bundle, Moebius second fundamental form, Moebius metric, semi-parallel submanifolds} {\small {\em  }}
	
	\date{}
	\maketitle

\section{Introduction}

Semi-parallel submanifolds of space forms have been extensively studied over the past decades. (See \cite{lumiste} and the references therein.) In particular, a local classification of semi-parallel submanifolds with flat normal bundle of space forms was obtained in \cite{Dillen}. Recall that an isometric immersion $f:M^{n}\to \mathbb{Q}^{m}_{c}$ of a Riemannian manifold of dimension $n$ into a complete simply connected space form of constant sectional curvature $c$ and dimension $m$ is called \emph{semi-parallel} if its second fundamental form $\alpha:TM^{n}\times TM^{n}\to T^{\perp}M$ with values in the normal bundle satisfies $$\bar{R}(X,Y)\cdot \alpha:=([\bar{\nabla}_{X}, \bar{\nabla}_Y]-\bar{\nabla}_{[X,Y]})\cdot \alpha=0,$$ for all $X,Y \in TM^{n}.$ In particular, this is the case if $\alpha$ is \emph{parallel}, in the sense that $\bar{\nabla}\alpha=0.$ Here $\bar{\nabla}$ stands for the van der Waerden-Bortolotti connection of $f$, given by $$(\bar{\nabla}_{X}\alpha)(Y,Z)=\nabla^{\perp}_{X}\alpha(Y,Z)-\alpha(\nabla_{X}Y,Z)-\alpha(Y, \nabla_{X}Z)$$ where $\nabla$ and $\nabla^{\perp}$ denote the Levi-Civita connection of $M^{n}$ and the normal connection of $f,$ respectively.

Semi-parallel submanifolds are, intrinsically, semi-symmetric Riemannian manifolds, that is, their curvature tensor $R$ satisfies $$R(X,Y)\cdot R:=([\nabla_X, \nabla_Y]-\nabla_{[X,Y]})\cdot R=0$$ for all $X,Y \in TM^{n}$, which is the integrability condition for the equation $\nabla R=0$ that characterizes locally symmetric Riemannian manifolds.

In the context of Moebius geometry of submanifolds, Wang introduced in \cite{Wang} two important Moebius invariant tensors named the \emph{Moebius metric} $\langle \cdot, \cdot\rangle^{*}$ and the \emph{Moebius second fundamental form} $\beta:TM^{n}\times TM^{n}\to T^{\perp}M$ associated to an umbilic-free submanifold $f:M^{n}\to \mathbb{R}^{n+p}$, and proved that, for $p=1$, the pair $(\langle\cdot, \cdot \rangle^{*}, \beta)$ completely determines $f$ up to Moebius transformations in $\mathbb{R}^{n+1}.$ A fundamental theorem for higher codimension is stated in Theorem 9.22 of \cite{DT}. More precisely, let $f:M^{n}\to \mathbb{R}^{m}$ be an isometric immersion of a Riemannian manifold $(M^{n},\langle \cdot, \cdot \rangle)$ into Euclidean space. Let $||\alpha||^{2}\in C^{\infty}(M)$ be given at any point $x \in M^{n}$ by $$||\alpha(x)||^{2}=\sum_{i,j=1}^{n}||\alpha(x)(X_i,X_j)||^{2},$$ where $\{X_i\}_{1 \le i \le n}$ is an orthonormal basis of $T_{x}M.$  Define $\rho\in C^{\infty}(M)$ by $$\rho^{2}=\frac{n}{n-1}(||\alpha||^{2}-n||\mathcal{H}||^{2}),$$ where $\mathcal{H}$ is the mean curvature vector field of $f$. Notice that $\rho$ vanishes precisely at the umbilical points of $f$. The Moebius metric determined by $f$ is defined on the open subset of non-umbilical points of $f$ by $$\langle \cdot, \cdot\rangle^{*}=\rho^{2}\langle \cdot, \cdot \rangle.$$ The Moebius second fundamental form $\beta$ is defined by $\beta=\rho(\alpha-\mathcal{H}\langle \cdot, \cdot \rangle)$.

Thanks to the conformal diffeomorphisms $$\sigma:\mathbb{R}^{m}\to \mathbb{S}^{m}-(-1,\bar{0}) \quad \mbox{and} \quad \tau: \mathbb{H}^{m}\to \mathbb{S}^{m}_{+}$$ defined by \begin{align*}\sigma(u)&=\left(\frac{1-|u|^{2}}{1+|u|^{2}},\frac{2u}{1+|u|^{2}}\right), \quad u \in \mathbb{R}^{m}\\
\tau(y_0,\bar{y}_1)&=\left(\frac{1}{y_0}, \frac{\bar{y}_1}{y_0}\right), \quad (y_0,\bar{y}_1)\in \mathbb{H}^{m},
\end{align*} submanifolds in $\mathbb{R}^{m}$ and $\mathbb{H}^{m}$ can be regarded as one of $\mathbb{S}^{m}$. It
should be noted that due to conformal invariance, the Moebius geometry of submanifolds
is essentially the same whether it is considered in $\mathbb{S}^{m}, \mathbb{R}^{m}$ or $\mathbb{H}^{m}.$ Here $\mathbb{H}^{m}$ is the $m$-dimensional hyperbolic space defined by $\mathbb{H}^{m}=\{(y_0,\bar{y}_1) \in \mathbb{R}^{+}\times \mathbb{R}^{m}: -y_0^{2}+||\bar{y}_1||^{2}=-1\}$ and $\mathbb{S}^{m}_{+}$ is the open hemisphere in $\mathbb{S}^{m}$ whose the first coordinate is positive. From now one our results are stated on the Euclidean space.

Wang's work motivated several authors to investigate umbilic-free immersions whose associated Moebius invariants have a simple structure or particular natural properties (see, among others, \cite{A, AT, classif-moebius-hyper, moebius-ricci-curvature, isopara-three-principals, mobius-homogeneous, CMF-three-principals, Moebius-Laguerre, deform, two-isoparametric, Rodrigues-Keti, blaschke-semi}.)

Recently, Hu, Xie and Zhai introduced in \cite{semi} the class of \emph{Moebius semi-parallel isometric immersions} $f:M^{n}\to \mathbb{R}^{m}$, meaning that,  $\bar{R}\cdot \beta=0$ where $\bar{R}$ is the \emph{van der Waerden-Bortolotti curvature operator} $\bar{R}$ \emph{acting on} $\beta$ defined by $$(\bar{R}\cdot \beta)(X,Y,Z,W):=R^{\perp}(X,Y)\beta(Z,W)-\beta(R^{*}(X,Y)Z,W)-\beta(Z,R^{*}(X,Y)W),$$ for all $X,Y,Z,W \in \mathfrak{X}(M)$. Here $R^{*}$ and $R^{\perp}$ denote the curvature tensors of the connections $\nabla^{*}$ and $\nabla^{\perp}$, respectively, where $\nabla^{*}$ is the Levi-Civita connection associated to $\langle \cdot, \cdot \rangle^{*}.$ Defining the second covariant derivative of $\beta$ by \begin{align*}
	(\nabla^{2}\beta)(X,Y,Z,W)&=\nabla^{\perp}_{X}\left[(\nabla^{\perp}_{Y}\beta)(Z,W)\right]-(\nabla^{\perp}_{\nabla^{*}_{X}Y}\beta)(Z,W)-(\nabla^{\perp}_{Y}\beta)(\nabla^{*}_{X}Z,W)\\
    &-(\nabla^{\perp}_{Y}\beta)(Z, \nabla^{*}_{X}W),
\end{align*} one can show that $\bar{R}\cdot \beta=0$ if and only if \begin{equation*}\label{caracterization-semi}(\nabla^{2}\beta)(X,Y,Z,W)-(\nabla^{2}\beta)(Y,X,Z,W)=0, \quad \forall\,\, X,Y,Z,W \in \mathfrak{X}(M).\end{equation*} In particular, all \emph{Moebius parallel submanifolds} $f:M^{n}\to \mathbb{R}^{m}$, i.e. $(\nabla^{\perp}_{X}\beta)(Y,Z)=0$ for all $X,Y,Z\in \mathfrak{X}(M)$, are semi-parallel, where $$(\nabla^{\perp}_{X}\beta) (X,Y):=\nabla^{\perp}_{X}\beta(Y,Z)-\beta(\nabla^{*}_{X}Y,Z)-\beta(Y, \nabla^{*}_{X}Z).$$ Therefore, the definition of Moebius semi-parallel submanifolds is a natural generalization of the concept of Moebius parallel submanifolds. The problem of classifying Moebius parallel submanifolds of the unit sphere was solved in \cite{classification-mobius-parallel-hyper, Zhai2, Zhai}. In the paper \cite{blaschke-semi}, the authors initiate the study of umbilic-free hypersurfaces in the unit sphere $\mathbb{S}^{n+1}$ with semi-parallel Blaschke tensor.

The authors in \cite{semi} proposed the problem of classifying all Moebius semi-parallel submanifolds of the unit sphere, and, as a result, classified hypersurfaces $f:M^{n}\to \mathbb{S}^{n+1}$, $n \geq 2$, with semi-parallel Moebius second fundamental form and two distinct principal curvatures. They also proved that an umbilic-free hypersurface $f:M^{n}\to \mathbb{S}^{n+1}$, $n \geq 3$, can admit at most three distinct principal curvatures. When $n=3$ and $f$ has three distinct principal curvatures, they proved that the hypersurface must be Moebius parallel. Recently, Antas and Manfio in \cite{AM} and Li and Xie in \cite{li} independently proved that a Moebius semi-parallel hypersurface of dimension $n \geq 4$ with three distinct principal curvatures must have parallel Moebius second fundamental form.  In this paper, we investigate the problem for normally flat submanifolds, that is, we classify umbilic-free Moebius semi-parallel submanifolds $f:M^{n}\to \mathbb{R}^{n+p}$ with flat normal bundle.

The following theorems are the main results of this paper.

\begin{theorem}\label{classfication-two}
	Let $f:M^{n}\to \mathbb{R}^{n+p}$, $n \geq 3$, be an umbilic-free isometric immersion with semi-parallel Moebius second fundamental form and flat normal bundle. If $f$ has two distinct principal normal vector fields, then $f(M^{n})$ is Moebius equivalent to an open subset of one of the following submanifolds:

\begin{description}
	\item[(i)] The image of $\sigma^{-1}$ of the isoparametric torus $ \mathbb{S}^{k}(r)\times \mathbb{S}^{n-k}(\sqrt{1-r^{2}})\subset \mathbb{S}^{n+1}.$
\item[(ii)]The standard cylinder $\mathbb{S}^{k}\times \mathbb{R}^{n-k}$ in $\mathbb{R}^{n+1}$. 
\item[(iii)] The image of $\sigma^{-1}\circ \tau$ of the hyperbolic cylinder $\mathbb{S}^{k}(r)\times \mathbb{H}^{n-k}(\sqrt{1+r^{2}})\subset \mathbb{H}^{n+1}$.
	\item[(iv)] the cylinder over a curve $\gamma:I \to \mathbb{R}^{p+1}$ with the first curvature function given by $\kappa(s)=be^{as}$, where $b\in \mathbb{R}_{+}$ and $a \in \mathbb{R}$ are constants.
    \item[(v)] the generalized cone over a curve $\gamma:I\to \mathbb{S}^{p+1}$  with the first curvature function given by $\kappa(s)=be^{as}$, where $b \in \mathbb{R}_{+}$ and $a \in \mathbb{R}$ are constants.
    \item[(vi)] the rotational submanifold over a curve $\gamma:I\to \mathbb{H}^{p+1}$ with the first curvature function given by $\kappa(s)=\frac{1}{\sqrt{cs+b}}$, where $b,c \in \mathbb{R}$ are non-zero constants. 
\end{description}
\end{theorem}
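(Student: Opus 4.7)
The plan is to exploit the two-eigendistribution structure forced by flat normal bundle plus two distinct principal normals. Write $TM^{n}=E_{1}\oplus E_{2}$ with $\dim E_{i}=m_{i}$ and $m_{1}+m_{2}=n$, so that $\beta$ diagonalises as $\beta(X,Y)=\sum_{i}\eta_{i}^{*}\langle X_{i},Y_{i}\rangle^{*}$, where $X_{i},Y_{i}$ denote the $E_{i}$-components and $\eta_{i}^{*}=\rho^{-1}(\eta_{i}-\mathcal{H})$ are the Moebius principal normals. The trace-free condition on $\beta$ reads $m_{1}\eta_{1}^{*}+m_{2}\eta_{2}^{*}=0$, so $\eta_{2}^{*}$ is a fixed numerical multiple of $\eta_{1}^{*}$, and all of $\beta$ is encoded in the length $\lambda=\|\eta_{1}^{*}\|$ and the unit direction $\xi=\eta_{1}^{*}/\lambda$. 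The Codazzi equation for $\beta$ then forces $E_{1}$ and $E_{2}$ to be umbilical distributions for $\nabla^{*}$, with each leaf a round sphere in the Moebius metric.

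Next, since $R^{\perp}=0$, the semi-parallel hypothesis reduces to
\[\beta(R^{*}(X,Y)Z,W)+\beta(Z,R^{*}(X,Y)W)=0.\]
Feeding in all combinations of vectors from $E_{1}$ and $E_{2}$ and substituting $R^{*}$ from the Moebius Gauss equation (which expresses it in terms of $\beta$ and the Blaschke tensor $A$), I translate the above into algebraic identities among $A$, $\lambda$, and the covariant derivatives of $\xi$. The goal is a clean dichotomy: either $\nabla^{\perp}\xi=0$ and $\lambda$ is constant, so that $\beta$ is Moebius parallel, or exactly one of $m_{1},m_{2}$ equals $1$ and the remaining scalar data satisfies a first-order ODE along the $1$-dimensional leaf.

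In the parallel branch I invoke the classification of Moebius parallel submanifolds with two principal normals from \cite{classification-mobius-parallel-hyper,Zhai2,Zhai}, which directly yields items (i)-(iii). In the non-parallel branch, umbilicity of the $m_{1}$-dimensional leaves of $E_{1}$ together with constancy of the attached principal normal along each leaf lets me apply the standard theorem on submanifolds with a smooth umbilical foliation (see \cite{DT}), writing $f$ locally as a cylinder, generalized cone, or rotational submanifold over a regular curve $\gamma$ in $\mathbb{R}^{p+1}$, $\mathbb{S}^{p+1}$ or $\mathbb{H}^{p+1}$, the three ambient space forms being distinguished by the sign of a Blaschke-type Moebius invariant. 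Reparametrizing the ODE from the previous step in arc-length derivatives of the first curvature $\kappa$ of $\gamma$ yields $\kappa'/\kappa=a$ in the flat and spherical ambient (giving cases (iv)-(v)) and $\kappa'=-\tfrac{c}{2}\kappa^{3}$ in the hyperbolic ambient (giving case (vi)).

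The hardest step is the second paragraph: expanding the semi-parallel identity through the Moebius Gauss equation and extracting exactly the right algebraic consequences to force the parallel/non-parallel dichotomy, while keeping the roles of $m_{1},m_{2}$ symmetric until one of them is forced to equal $1$. A secondary difficulty is identifying in the non-parallel branch which of the three ambient geometries houses $\gamma$; this should be read off from how the relevant Moebius invariants transform under the conformal diffeomorphisms $\sigma$ and $\tau$ recalled in the introduction, but it requires careful bookkeeping to match each sign regime with the correct curvature formula.
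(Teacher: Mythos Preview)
Your proposal reaches the right destination but the route you describe diverges from the paper's and contains a genuine gap in the step you yourself flag as hardest.

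The paper does not extract a parallel/non-parallel dichotomy from the semi-parallel identity via the Moebius Gauss equation. Instead, it observes (citing \cite{semi}) that semi-parallelity implies the Moebius form $\omega$ is closed, and proves a structure lemma (Lemma~\ref{class-two}) under only this weaker hypothesis: any such $f$ with two principal normals of multiplicities $k$ and $n-k$ is Moebius equivalent to a cylinder, generalized cone, or rotational submanifold over some $g:M^{k}\to\mathbb{Q}^{p+k}_{c}$, and if $k\ge 2$ then $g$ is automatically umbilical. The key geometric fact is that the $k$-dimensional eigendistribution is totally geodesic for $\nabla^{*}$ and \emph{spherical} (umbilical with parallel mean curvature normal) for the induced metric --- the latter being exactly the hypothesis needed for the Dajczer--Florit--Tojeiro theorem \cite{umbilicity}. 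The full semi-parallel condition only re-enters when $k=1$, where a direct computation of the Blaschke tensor for the three model families (Examples~\ref{cylinder}--\ref{rotacao}) combined with Lemma~\ref{condition} yields the ODEs for $\kappa$.

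Your dichotomy step does not work as written. First, $\lambda=\|\eta_{1}^{*}\|$ is \emph{always} constant: the trace-free constraint $m_{1}\eta_{1}^{*}+m_{2}\eta_{2}^{*}=0$ together with $\|\beta\|_{*}^{2}=\tfrac{n-1}{n}$ pins down $\|\eta_{i}^{*}\|$ as explicit numbers depending only on $n$ and the multiplicities, so ``$\lambda$ constant'' distinguishes nothing. Second, even granting $\nabla^{\perp}\xi=0$, this does not by itself give $\nabla^{\perp}\beta=0$; you would still need control of the eigendistributions under $\nabla^{*}$. Third, the reason cases (i)--(iii) end up Moebius parallel is not an algebraic consequence of the semi-parallel identity but rather that the structure lemma forces the profile $g$ to be umbilical whenever $k\ge 2$. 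Finally, your Codazzi claim that each $E_{i}$ has ``round-sphere leaves in the Moebius metric'' is not what is needed: the paper shows the relevant distribution is totally geodesic for the Moebius metric and spherical for the \emph{induced} metric, and it is this second property --- proved via the Blaschke tensor and the relation between the two connections --- that unlocks the cylinder/cone/rotational trichotomy.
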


\begin{theorem}\label{generic}
    Let $f:M^{n}\to \mathbb{R}^{n+p}$, $n \geq 2$ and $p \geq 2$, be an umbilic-free isometric immersion with semi-parallel Moebius second fundamental form and flat normal bundle. If $f$ has $n$ distinct principal normal vector fields then $f$ has vanishing Moebius sectional curvature.
\end{theorem}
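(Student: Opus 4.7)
The plan is to show that under the two structural hypotheses the semi-parallel identity $\bar{R}\cdot\beta=0$ degenerates into a purely algebraic system that forces the Moebius Riemann tensor $R^{*}$ to vanish identically.

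First I would use the hypotheses to pick a preferred frame. Flat normal bundle permits a simultaneous diagonalization of all shape operators, and the assumption of $n$ distinct principal normal vector fields ensures that each common eigenspace is one-dimensional, producing a (locally essentially unique) orthonormal frame $\{e_{1},\ldots,e_{n}\}$ of $TM$ with $\alpha(e_{i},e_{j})=\delta_{ij}\eta_{i}$ and pairwise distinct $\eta_{i}\in T^{\perp}M$. Setting $B_{i}:=\rho(\eta_{i}-\mathcal{H})$ yields $\beta(e_{i},e_{j})=\delta_{ij}B_{i}$; since $\rho$ is nonvanishing on the umbilic-free locus and $B_{i}-B_{j}=\rho(\eta_{i}-\eta_{j})$, the normal vectors $B_{1},\ldots,B_{n}$ are themselves pairwise distinct.

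Next I would specialize the semi-parallel identity. Because $R^{\perp}=0$, the condition $\bar{R}\cdot\beta=0$ reduces to
\begin{equation*}
\beta(R^{*}(X,Y)Z,W)+\beta(Z,R^{*}(X,Y)W)=0.
\end{equation*}
Evaluating on $(e_{i},e_{j},e_{k},e_{l})$ and expanding $R^{*}(e_{i},e_{j})e_{k}=\sum_{m}R^{*m}{}_{kij}\,e_{m}$, the diagonal form of $\beta$ turns this into $R^{*l}{}_{kij}\,B_{l}+R^{*k}{}_{lij}\,B_{k}=0$. The algebraic skew-symmetry $R^{*l}{}_{kij}=-R^{*k}{}_{lij}$, which persists in the $\langle\cdot,\cdot\rangle$-orthonormal frame $\{e_{i}\}$ because $\langle\cdot,\cdot\rangle^{*}$ is conformal to $\langle\cdot,\cdot\rangle$, collapses the equation to $R^{*k}{}_{lij}\,(B_{k}-B_{l})=0$. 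Distinctness of the $B_{i}$'s kills all off-diagonal components, and the antisymmetry kills the diagonal ones, so $R^{*}\equiv 0$ — the asserted vanishing of the Moebius curvature.

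The argument is essentially algebraic with no serious analytic obstacle, which is why the result should be much easier than Theorem \ref{classfication-two}. The one point that requires real care is the implication ``distinct $\eta_{i}$'s $\Rightarrow$ distinct $B_{i}$'s'': this is where the full strength of the $n$-distinct-principal-normals hypothesis enters, since if any two $B_{i}$'s coincided, the argument above would kill only some components of $R^{*}$ and the conclusion would fail. Everything else is a matter of setting up the frame correctly and reading off the antisymmetries.
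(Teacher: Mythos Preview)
Your argument is correct and follows essentially the same route as the paper. The paper routes the computation through Lemma~\ref{condition}: in a $\langle\cdot,\cdot\rangle^{*}$-orthonormal frame diagonalizing $\beta$, the semi-parallel condition together with $R^{\perp}=0$ gives $\langle R^{*}(X_i,X_j)X_u,X_l\rangle^{*}(\bar{\eta}_l-\bar{\eta}_u)=0$, and distinctness of the $\bar{\eta}_i$ forces every component of $R^{*}$ to vanish --- exactly your computation, with $\bar{\eta}_i=\rho^{-2}B_i$ and the frame rescaled by $\rho$.
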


Due to Lemma \ref{condition}, all Moebius flat umbilic-free submanifolds in the Euclidean space with flat normal bundle are also Moebius semi-parallel. The preceding theorem for $p=1$ involve only the cases $n=2$ and $n=3$ which was classified in Theorems 5.1 and 5.3 of \cite{semi}, because a Moebius semi-parallel hypersurface has at most three distinct principal curvatures.

We recall that a normal vector field $\bar{\eta}\in \Gamma(N_fM)$ is said to be parallel if $\nabla^{\perp}_{X}\bar{\eta}=0,$ for all $X \in \mathfrak{X}(M).$ When the Moebius principal normal vector fields (see \eqref{moebius-principal}) associated to an umbilic-free submanifold with flat normal bundle are parallel we obtain the following answer to the problem. 

\begin{theorem}\label{semi-constant}
    Let $f:M^{n}\to \mathbb{R}^{n+p}$, $n \geq 4$, be an umbilic-free isometric immersion with semi-parallel Moebius second fundamental form and flat normal bundle. If $f$ has at least three distinct parallel Moebius principal normal vector fields with at least one of them of multiplicity greater than one, then, up to Moebius transformations, $f(M^{n})$ is locally a submanifold with parallel Moebius second fundamental form.
\end{theorem}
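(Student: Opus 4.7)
The plan is to show directly that $\bar\nabla\beta=0$, by reducing this to the assertion that each eigendistribution of $\beta$ is preserved by the Moebius Levi--Civita connection $\nabla^*$. Let $E_1,\dots,E_s$ be the eigendistributions of $\beta$ associated with the distinct parallel Moebius principal normals $\eta_1^*,\dots,\eta_s^*$, with $s\geq 3$ and, after relabelling, $\dim E_1\geq 2$. Since $\beta(Y,Z)=\delta_{ij}\langle Y,Z\rangle^*\eta_i^*$ for $Y\in E_i$, $Z\in E_j$, and $\nabla^\perp\eta_i^*=0$, a direct computation shows that $(\bar\nabla_X\beta)(Y,Z)=0$ whenever $Y,Z$ lie in the same eigenspace, while for $Y\in E_i$, $Z\in E_j$ with $i\neq j$,
\[
(\bar\nabla_X\beta)(Y,Z)=-\langle(\nabla^*_XY)_{E_j},Z\rangle^*\,\eta_j^*-\langle Y,(\nabla^*_XZ)_{E_i}\rangle^*\,\eta_i^*.
\]
Using that the $\eta_k^*$ are pairwise non-proportional (which can be arranged from distinctness plus parallelism in a flat normal bundle), the vanishing of $\bar\nabla\beta$ is thus \emph{equivalent} to $\nabla^*E_i\subset E_i$ for every $i$.

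The flatness of the normal bundle gives $R^\perp\equiv 0$, so the semi-parallel condition collapses to
\[
\beta(R^*(X,Y)Z,W)+\beta(Z,R^*(X,Y)W)=0,\qquad X,Y,Z,W\in\mathfrak{X}(M).
\]
Plugging in $Z\in E_c$, $W\in E_d$ with $c\neq d$ and invoking once more the non-proportionality of the $\eta_k^*$, I would extract that $R^*(X,Y)$ preserves each eigendistribution $E_c$. This is a curvature-level statement, and the heart of the argument is to upgrade it to the connection-level statement $\nabla^*E_c\subset E_c$.

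For this upgrade I would use the Moebius Codazzi equation for $\beta$ (which involves the Moebius form $\Phi$ and simplifies substantially when the Moebius principal normals are parallel), together with the expression of $R^*$ as a commutator of $\nabla^*$. The multiplicity hypothesis $\dim E_1\geq 2$ enters precisely here: picking two linearly independent $Y,Y'\in E_1$ produces two simultaneous Codazzi identities whose comparison pins down the off-diagonal components $(\nabla^*_XY)_{E_j}$ and forces them to vanish. The main obstacle of the proof is exactly this step, since without a distribution of multiplicity at least two the Codazzi equation yields only tautological information about those off-diagonal components, and the full $\nabla^*$-parallelism of the distributions cannot be concluded. Once $\nabla^*E_c\subset E_c$ is established for every $c$, the opening paragraph gives $\bar\nabla\beta=0$, and hence $f(M^n)$ is locally, up to Moebius transformations, a submanifold with parallel Moebius second fundamental form.
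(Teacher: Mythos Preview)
Your reduction of $\bar\nabla\beta=0$ to the autoparallelism $\nabla^*E_i\subset E_i$ of each eigendistribution is correct and is exactly what the paper eventually establishes. The genuine gap is at the step you yourself label ``the main obstacle'': the mechanism you propose for the upgrade from the curvature level to the connection level does not work as stated, and the ingredients that make it work are absent from your sketch.

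Concretely, take unit $Y,Y'\in E_1$ and $X\in E_j$, $j\neq 1$. The conformal Codazzi equation for $\beta$ (with $\bar\eta_i$ parallel) gives
\[
\langle \nabla^*_{Y}Y,X\rangle^*(\bar\eta_j-\bar\eta_1)=\omega(X)=\langle \nabla^*_{Y'}Y',X\rangle^*(\bar\eta_j-\bar\eta_1),
\]
so comparing the two identities shows only that $E_1$ is \emph{umbilical} in $(M,\langle\,,\,\rangle^*)$; the common value $\langle\nabla^*_YY,X\rangle^*$ is not forced to vanish, and hence neither is $\omega(X)$. The curvature fact that $R^*$ preserves each $E_c$ (equivalently $K^*(X_a,X_b)=0$ for $a\neq b$) is far too weak to conclude autoparallelism of $E_c$ on its own. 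What closes the argument in the paper is a second equation you never invoke: the conformal Codazzi equation \eqref{C2} for the Blaschke tensor $\psi$. Pairing it with the equation above and using the semi-parallel identity $\langle\bar\eta_i,\bar\eta_j\rangle+\theta_i+\theta_j=0$ yields
\[
\langle \nabla^*_{Y}Y,X\rangle^*\bigl(\|\bar\eta_1\|^2+2\theta_1\bigr)=0.
\]
One then needs the algebraic observation (Lemma \ref{possibilities}) that $\|\bar\eta_i\|^2+2\theta_i$ can vanish for at most one index $i$, which forces a case split: if the high-multiplicity eigenbundle is not the exceptional one, $\omega\equiv 0$ follows and the completion is routine; if it \emph{is} the exceptional one and all other eigenbundles are lines, a separate argument (Lemmas \ref{orthogonal}--\ref{r.o.} and a further Codazzi computation) is required. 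None of this structure---the $\psi$-Codazzi equation, the quantity $\|\bar\eta_i\|^2+2\theta_i$, or the case analysis---appears in your proposal, and without it the ``two simultaneous Codazzi identities'' cannot pin down the off-diagonal components.

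A minor side remark: distinctness plus parallelism in a flat normal bundle does \emph{not} give pairwise non-proportionality of the $\bar\eta_i$ (think of $\xi$ and $2\xi$), so your asserted equivalence is unjustified; fortunately only the sufficiency direction is needed, and that is unaffected.
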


When the Moebius principal normal vector fields of $f$ are not necessarily parallel along the normal connection, we obtain that $f$ is locally a submanifold with constant Moebius scalar curvature.

\begin{theorem}\label{three-normals} Let $f:M^{n}\to \mathbb{R}^{n+p}$, $n \geq 4$, be an isometric immersion with flat normal bundle that has at least three distinct principal normal vector fields with at least one of them with multiplicity greater than one. If the Moebius second fundamental form is semi-parallel then $f(M^n)$ is locally a submanifold with constant Moebius scalar curvature.
\end{theorem}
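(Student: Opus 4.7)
The plan is to exploit the rigid algebraic structure imposed by flat normal bundle together with the semi-parallel identity, worked out in a frame adapted to the common eigenspace decomposition. Since the normal bundle of $f$ is flat, the shape operators commute and $TM$ splits as $E_1\oplus\cdots\oplus E_s$ with $s\geq 3$ distinct principal normals $\eta_1,\ldots,\eta_s$ and multiplicities $n_i:=\dim E_i$, with $n_1\geq 2$ by hypothesis. The corresponding Moebius principal normals $\bar{\eta}_i=\rho^{-1}(\eta_i-\mathcal{H})$ are also mutually distinct (they differ by $\rho^{-1}(\eta_i-\eta_j)$), and in a local Moebius-orthonormal frame $\{X_a^*\}$ adapted to the decomposition one has $\beta(X_a^*,X_b^*)=\delta_{ab}\,\bar{\eta}_{i(a)}$, where $i(a)$ indicates the eigenspace containing $X_a^*$.

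Next, I would translate the semi-parallel condition, equivalent to $(\nabla^2\beta)(X,Y,Z,W)-(\nabla^2\beta)(Y,X,Z,W)=0$, into the equation
\begin{equation*}
R^\perp(X,Y)\beta(Z,W)=\beta(R^*(X,Y)Z,W)+\beta(Z,R^*(X,Y)W),
\end{equation*}
and evaluate it on quadruples of frame vectors. Taking $X=X_a^*$, $Y=X_b^*$ both inside the multiplicity-$\geq 2$ eigenspace $E_1$ (such pairs exist precisely because $n_1\geq 2$), the right-hand side is a linear combination of terms of the form $\langle R^*(X_a^*,X_b^*)X_c^*,X_d^*\rangle^*\,(\bar{\eta}_{i(c)}-\bar{\eta}_{i(d)})$. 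Because the Moebius principal normals are pairwise distinct, this forces the off-diagonal blocks of $R^*(X_a^*,X_b^*)$ (between different eigenspaces) to be controlled in a specific way by the normal-bundle curvature of $\beta$, and in combination with the Codazzi equation for $\beta$ in Moebius geometry (the analogue of the Mainardi-Codazzi equation, which carries a Blaschke-tensor contribution), yields a system of relations on the derivatives $\nabla^*_{X_a^*}\bar{\eta}_j$ and on the Levi-Civita coefficients of $\langle\cdot,\cdot\rangle^*$.

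From this system I would extract, by tracing in pairs over the different eigenspaces, the statement that the symmetric combinations $\sum_{i,j}n_in_j\langle\bar{\eta}_i,\bar{\eta}_j\rangle$ and $\sum_i n_i\|\bar{\eta}_i\|^2$ have vanishing derivative in every direction of $E_1$ and, by symmetry, in every direction along any eigenspace participating in a nondegenerate triple. The Moebius Gauss equation then expresses the scalar curvature $S^*$ of $(M^n,\langle\cdot,\cdot\rangle^*)$ as an algebraic expression in these symmetric combinations plus the trace of the Blaschke tensor (recall that the Moebius norm $\|\beta\|^2_*=(n-1)/n$ is automatically constant); the trace of the Blaschke tensor is itself a Moebius invariant obeying a transport equation that will also be controlled by the system above. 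Putting everything together, I obtain $dS^*=0$.

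The main obstacle will be the combinatorial bookkeeping in the middle step: the semi-parallel identity couples ordered triples $(E_i,E_j,E_k)$ of eigenspaces and the conclusion one can draw depends sensitively on whether two of them coincide, with the case $E_i=E_j$ being the one that requires $n_i\geq 2$. One must be careful to isolate exactly the contractions implying constancy of $S^*$ while not over-contracting and falsely deriving the stronger conclusion of Theorem \ref{semi-constant} (parallelism of $\beta$), which does not hold here because the $\bar{\eta}_i$ are no longer assumed parallel. A secondary technical point is managing the discrepancy between the Euclidean normal connection (which is flat) and the Moebius normal connection (which is not, and differs by explicit terms in $\rho$ and $\mathcal{H}$); keeping these straight is essential when converting between the two in the Codazzi and Gauss equations.
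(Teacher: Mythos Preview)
Your proposal has a genuine gap at its central step. The two ``symmetric combinations'' you propose to prove constant are already identically constant: since $\beta$ is traceless, $\sum_i n_i\bar\eta_i=0$, hence $\sum_{i,j}n_in_j\langle\bar\eta_i,\bar\eta_j\rangle=\|\sum_i n_i\bar\eta_i\|^2=0$; and $\sum_i n_i\|\bar\eta_i\|^2=\|\beta\|^2_*=(n-1)/n$. So that step carries no information. Since $\tr\,\psi=(n^2s^*+1)/(2n)$, the entire content of the theorem is the constancy of $\tr\,\psi$, which you defer to an unspecified ``transport equation'' without indicating any mechanism. (A side remark: there is no separate ``Moebius normal connection''; the normal connection is conformally invariant, so $R^\perp=0$ throughout and there is no discrepancy to manage.)

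What must actually be shown is that each \emph{individual} quantity $\|\bar\eta_i\|^2+2\theta_i$ is constant for every $i$ with $m_i\geq2$: the semi-parallel identity $\langle\bar\eta_i,\bar\eta_j\rangle+\theta_i+\theta_j=0$ ($i\neq j$) kills all mixed Moebius sectional curvatures, so $n(n-1)s^*=\sum_{m_i\geq2}m_i(m_i-1)(\|\bar\eta_i\|^2+2\theta_i)$. The paper proves constancy of these via a geometric structure your outline misses entirely. The conformal Codazzi equations (for $\beta$ and for $\psi$, combined using the semi-parallel identity) force each $E_{\eta_i}$ to be umbilical in $(M,\langle\cdot,\cdot\rangle^*)$ with mean curvature vector $-(\grad^*\log|\|\bar\eta_i\|^2+2\theta_i|^{-1/2})_{E_{\eta_i}^\perp}$, and each $E_{\eta_i}^\perp$ to be integrable; this yields a local twisted-product decomposition of $(M,\langle\cdot,\cdot\rangle^*)$ with twisting functions $|\|\bar\eta_i\|^2+2\theta_i|^{-1/2}$. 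The vanishing of the mixed Moebius sectional curvatures then forces the Hessian of each such warping function on the transverse one-dimensional factors to vanish, and a direct computation converts this into $\partial_{x_j}(\|\bar\eta_i\|^2+2\theta_i)=0$ in every direction. Your tracing approach does not access this structure, and there is no evident direct algebraic substitute: the transverse derivatives of $\theta_i$ and of $\|\bar\eta_i\|^2$ are individually nonzero in general, and only their specific combination is controlled.
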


From Theorem 5.5 of \cite{semi}, we have that a Moebius semi-parallel hypersurface $f:M^{n}\to \mathbb{R}^{n+1}$, $n \geq 4,$ with three distinct principal curvatures and constant Moebius scalar curvature has parallel Moebius second fundamental form. Therefore, from Theorem \ref{three-normals} and the main theorem of \cite{classification-mobius-parallel-hyper} we obtain the following consequence.

\begin{corollary}[\cite{AM},\cite{li}]
    If $f:M^{n}\to \mathbb{R}^{n+1}$, $n \geq 4$, is a hypersurface with semi-parallel Moebius second fundamental form and $r \geq 3$ distinct principal curvatures, then, up to Moebius transformations, $f(M^{n})$ is locally a hypersurface with parallel Moebius second fundamental form which is a cone over the torus $\mathbb{S}^{p}(r)\times \mathbb{S}^{q}(\sqrt{1-r^{2}}) \subset \mathbb{S}^{p+q+1}$ where $p,q \in \mathbb{N}$ and $p+q <n$.
\end{corollary}

This paper is organized as follows. In section \ref{preliminaries}, we recall some basic results and definitions from \cite{semi} and \cite{Wang}, and then we prove Theorem \ref{generic}. In section \ref{examples}, we show examples of Moebius semi-parallel submanifolds with flat normal bundle and two distinct principal normal vector fields, then in section \ref{first-thm} we prove Theorem \ref{classfication-two}. In section \ref{second-thm}, we prove Theorem \ref{semi-constant}. In section \ref{third-thm}, we prove Theorem \ref{three-normals}.

\section{Preliminaries}\label{preliminaries}

Following the seminal paper \cite{Wang}, we recall the compatibility equations associated to an umbilic-free submanifold $f:M^{n}\to \mathbb{R}^{n+p}$.

The \emph{Blaschke tensor} $\psi=\psi^{f}$ of $f$ is the symmetric $C^{\infty}(M)$-bilinear form given by \begin{equation}\label{blaschke}\psi(X,Y)=\frac{1}{\rho}\langle \beta(X,Y), \mathcal{H}\rangle+\frac{1}{2\rho^{2}}(||\grad^{*}\rho||^{2}_{*}+||\mathcal{H}||^{2})\langle X,Y\rangle^{*}-\frac{1}{\rho}\Hess^{*}\rho(X,Y)\end{equation} and its \emph{Moebius form} $\omega=\omega^{f}$ is the normal bundle valued one-form defined by $$\omega(X)=-\frac{1}{\rho}(\nabla^{\perp}_{X}\mathcal{H}^{f}+\beta(X,\grad^{*}\rho))$$ where $\grad^{*}$ and $\Hess^{*}$ denote the gradient and the Hessian on $(M^{n}, \langle \,\,,\,\rangle^{*})$.

The Moebius second fundamental form, the Blaschke tensor and the Moebius form of $f$ are Moebius invariant tensors that satisfy the following compatibility equations.

\begin{proposition}[\cite{Wang}]\label{prop1}
	\textit{The following equations hold:}
	
	\begin{description}
		\item[(i)] \textit{The conformal Gauss equation}: \begin{align}\label{G}
			\langle R^{*}(X,Y)Z,W\rangle^{*}=&\langle \beta(X,W), \beta(Y,Z)\rangle - \langle \beta(X,Z), \beta(Y,W)\rangle\nonumber\\ 
			&+\psi(X,W)\langle Y,Z \rangle^{*}+\psi(Y,Z)\langle X,W\rangle^{*}\nonumber\\
			&-\psi(X,Z)\langle Y,W \rangle^{*}-\psi(Y,W)\langle X,Z \rangle^{*}
		\end{align} \textit{for all} $X,Y,Z,W \in \mathfrak{X}(M)$.
		\item[(ii)] \textit{The conformal Codazzi equations}:
		\begin{equation}\label{C1}
			({^{f}\nabla^{\perp}_{X}}\beta)(Y,Z)-({^{f}\nabla^{\perp}_{Y}}\beta)(X,Z)=\omega((X \wedge Y)Z)
		\end{equation} \textit{and} \begin{equation}\label{C2}
			(\nabla^{*}_{X}\psi)(Y,Z)-(\nabla^{*}_{Y}\psi)(X,Z)=\langle \omega(Y), \beta(X,Z)\rangle-\langle \omega(X), \beta(Y,Z)\rangle
		\end{equation} \textit{for all} $ X,Y,Z \in \mathfrak{X}(M),$ \textit{where} $$ ({^{f}\nabla^{\perp}_{X}}\beta)(Y,Z)={^{f}\nabla^{\perp}_{X}}\beta(Y,Z)-\beta(\nabla^{*}_{X}Y,Z)-\beta(Y, \nabla^{*}_{X}Z),$$  $$(\nabla^{*}_{X}\psi)(Y,Z)=X(\psi(Y,Z))-\psi(\nabla^{*}_{X}Y, Z)-\psi(Y, \nabla^{*}_{X}Z)$$ \textit{and}
		$$(X \wedge Y)Z=\langle Y,Z\rangle^{*} X-\langle X,Z\rangle^{*} Y.$$ 
		\item[(iii)] \textit{The conformal Ricci equations}: \begin{equation}\label{R1}
			d\omega(X,Y)=\beta(Y, \hat{\psi}X)-\beta(X, \hat{\psi}Y)
		\end{equation} \textit{and} \begin{equation}\label{R2}
			\langle R^{\perp}(X,Y)\xi, \eta \rangle=\langle [B_{\xi}, B_{\eta}]X,Y\rangle^{*}
		\end{equation} \textit{for all} $ X,Y \in \mathfrak{X}(M)$ \textit{and} $\xi, \eta \in \Gamma(N_{f}M)$, \textit{with} $\hat{\psi}, B_{\xi} \in \Gamma(\End(TM))$ \textit{given by} \begin{equation*}
			\langle \hat{\psi}X,Y\rangle^{*}=\psi(X,Y) \quad \mbox{and} \quad \langle B_{\xi}X,Y\rangle^{*}=\langle \beta(X,Y), \xi \rangle.
		\end{equation*}
	\end{description}	
\end{proposition}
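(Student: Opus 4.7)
The plan is to derive these compatibility equations by working in the light-cone model for Moebius geometry, which is the approach used by Wang in \cite{Wang}. First I would lift $f$ to the so-called \emph{Moebius position vector} $Y : M^n \to \mathcal{C}^{n+p+1}_{+} \subset \mathbb{L}^{n+p+2}$, which takes values in the positive light cone of the Lorentz--Minkowski space $\mathbb{L}^{n+p+2}$ with signature $(n+p+1,1)$, and is defined by $Y = \rho\cdot\tilde{f}$ for a suitable canonical lift $\tilde{f}$ of $f$ to the light cone. A direct computation shows that the metric induced on $M^n$ by $Y$ coincides with $\rho^{2}\langle\cdot,\cdot\rangle = \langle\cdot,\cdot\rangle^{*}$, so that the Moebius metric appears as the first fundamental form of the lift and is manifestly Moebius-invariant.

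Next I would set up a moving frame along $Y$ in $\mathbb{L}^{n+p+2}$ consisting of the null vector $Y$ itself, the unique null vector $N$ with $\langle Y,N\rangle = 1$ that is orthogonal to $Y_{*}(TM^{n})$ and to the Moebius normal bundle, an orthonormal basis $\{E_1,\dots,E_n\}$ of $Y_{*}(TM^{n})$, and an orthonormal frame of the Moebius normal bundle. Differentiating this frame by the flat ambient connection $\tilde{\nabla}$ of $\mathbb{L}^{n+p+2}$ produces structure equations of the form
\begin{equation*}
\tilde{\nabla}_{X} E_i = \nabla^{*}_{X} E_i + \beta(X,E_i) - \psi(X,E_i)\,Y - \langle X,E_i\rangle^{*}\,N,
\end{equation*}
together with analogous formulas for $\tilde{\nabla}_{X} N$ involving $\hat{\psi}X$ and $\omega(X)$, and for $\tilde{\nabla}_{X}\xi$ involving the shape operator $B_{\xi}$ and $\omega$. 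The identification of the coefficients appearing in these structure equations with the tensors $\beta$, $\psi$, $\omega$ as defined via \eqref{blaschke} is the crucial bookkeeping step.

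The three sets of compatibility equations in the statement then follow from the flatness $\tilde{R}\equiv 0$ of the ambient connection, by computing $\tilde{R}(X,Y)$ on each frame element and reading off the components in each of the $Y$, $N$, tangent and Moebius normal directions. The tangential component of $\tilde{R}(X,Y)E_k$ yields the conformal Gauss equation \eqref{G}; the $Y$-component and the normal component of the same expression yield the Codazzi equations \eqref{C2} and \eqref{C1}, respectively; and $\tilde{R}(X,Y)\xi$ produces the conformal Ricci equations \eqref{R1} and \eqref{R2} from its $Y$-component and its normal component. The symmetries of the pairing on $\mathbb{L}^{n+p+2}$ and the null character of $Y$ and $N$ dictate which component of $\tilde{R}=0$ delivers which equation.

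The main obstacle is the bookkeeping rather than any genuine conceptual difficulty: one must carefully track the normalisation of $Y$ and $N$, verify that the $Y$-component of $\tilde{\nabla}_{X}E_i$ is indeed $-\psi(X,E_i)$ for the Blaschke tensor defined by \eqref{blaschke}, and check that the normal component of $\tilde{\nabla}_{X}N$ equals $\omega(X)$. Once these identifications are secured, the six equations in the proposition drop out directly from $\tilde{R}=0$, and the argument uses only the umbilic-freeness of $f$, which ensures $\rho>0$ so that $Y$ is well defined.
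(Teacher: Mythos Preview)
Your outline is correct and follows the light-cone approach of Wang \cite{Wang}, which is exactly where the paper sends the reader: the proposition is stated as a citation and is not proved in the paper itself. So there is nothing to compare against beyond noting that your sketch matches Wang's original method.
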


We point out that the Moebius form of an umbilic-free Moebius semi-parallel isometric immersion $f:M^{n}\to \mathbb{R}^{n+p}$, $n \geq 3$, with flat normal bundle is closed. (See Proposition 3.2 of \cite{semi}). Therefore, by the conformal Ricci equation, there exists an orthonormal frame $X_1, \ldots,X_n$ with respect to $\langle \,\,,\,\rangle^{*}$ that diagonalizes $\beta$ and $\psi$ such that
$$\beta(X_i,X_i)=\bar{\eta}_i \quad \mbox{and} \quad \psi(X_i,X_i) =\theta_i, \quad \forall\,\, 1 \le i \le n.$$

Let $f:M^{n}\to \mathbb{R}^{n+p}$ be a proper isometric immersion with flat normal bundle and let ${\eta}_1, \ldots, {\eta}_k$, $k \geq 2$, be the distinct principal normal vector fields of $f$ whose associated eigenbundles are denoted by $E_{\eta_1}, \ldots, E_{\eta_k}.$ Given $X\in \Gamma(E_{\eta_i})$ and $Y \in \mathfrak{X}(M)$, we have \begin{align}
    \beta(X,Y)&=\rho(\alpha(X,Y)-\langle X,Y\rangle \mathcal{H})\nonumber\\
    &=\rho(\langle X,Y\rangle \eta_i-\langle X,Y\rangle \mathcal{H})\nonumber\\
    &=\langle X,Y\rangle^{*}\rho^{-1}(\eta_i-\mathcal{H}).\label{moebius-principal}
\end{align}

The normal vector fields $\bar{\eta}_i=:\rho^{-1}(\eta_i-\mathcal{H})$, $1 \le i \le k,$ are called \emph{Moebius principal normal vector fields} of $f.$ Notice that $E_{\eta_i}$ is the smooth distribution associated with $\bar{\eta}_i$ and the number of distinct Moebius principal normal vector fields is $k$.

The next result is a simple characterization of Moebius semi-parallel submanifolds with flat normal bundle.

\begin{lemma}\label{condition}
	Let $f:M^{n}\to \mathbb{R}^{m}$, $n \geq 2$, be an umbilic-free isometric immersion with flat normal bundle. Then $f$ is Moebius semi-parallel if and only if \begin{equation}\label{condition-semi}
		\langle \bar{\eta}_i, \bar{\eta}_j\rangle+\theta_i+\theta_j=0, \quad 1 \le i \ne j \le k,
	\end{equation} where $\bar{\eta}_1, \ldots, \bar{\eta}_k$ are the distinct Moebius principal normal vector fields of $f$ and $\theta_1, \ldots, \theta_k$ are the eigenvalues of $\psi$.
\end{lemma}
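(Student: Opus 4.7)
The plan is to exploit the flatness of the normal bundle to reduce $\bar{R}\cdot\beta=0$ to a pointwise algebraic identity, and then evaluate it in a simultaneous eigenframe of $\beta$ and $\psi$.

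Since $R^{\perp}\equiv 0$, the identity $\bar{R}\cdot\beta=0$ collapses to
$$\beta(R^{*}(X,Y)Z, W) + \beta(Z, R^{*}(X,Y)W) = 0, \quad \forall\,X,Y,Z,W \in \mathfrak{X}(M).$$
So the problem reduces to a statement about the interaction of $R^{*}$ with $\beta$. I would then work in the $\langle\cdot,\cdot\rangle^{*}$-orthonormal frame $\{X_1,\dots,X_n\}$ simultaneously diagonalising $\beta$ and $\psi$ described just before the lemma. For the forward direction its existence follows from Proposition~3.2 of \cite{semi} (closedness of $\omega$) combined with equation \eqref{R1}, which forces $\hat\psi$ to preserve each eigenbundle of $\beta$; for the converse, the very labelling of $\theta_i$ by the principal normal $\bar\eta_i$ presupposes such a frame.

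Next I would substitute $X=X_i$, $Y=X_j$, $Z=X_k$, $W=X_l$ into the conformal Gauss equation \eqref{G}. The Kronecker deltas coming from diagonality annihilate almost every term and yield, for $i\neq j$,
\begin{align*}
R^{*}(X_i,X_j)X_j &= C_{ij}\,X_i,\\
R^{*}(X_i,X_j)X_i &= -C_{ij}\,X_j,\\
R^{*}(X_i,X_j)X_k &= 0 \qquad\text{for } k\notin\{i,j\},
\end{align*}
where $C_{ij}:=\langle\bar\eta_i,\bar\eta_j\rangle+\theta_i+\theta_j$ (interpreted frame-wise, i.e.\ $\bar\eta_i$ denotes the common Moebius principal normal of the eigenbundle containing $X_i$).

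Plugging these formulas into the reduced identity, the only choice of $(Z,W)$ producing a non-trivial equation is $(X_i,X_j)$ with $i\neq j$, which gives
$$-C_{ij}\,\beta(X_j,X_j)+C_{ij}\,\beta(X_i,X_i)=C_{ij}\bigl(\bar\eta_i-\bar\eta_j\bigr)=0.$$
When $X_i$ and $X_j$ lie in the same principal eigenbundle one has $\bar\eta_i=\bar\eta_j$ and the equation is automatic, while for $X_i$, $X_j$ corresponding to distinct Moebius principal normals it is equivalent to $C_{ij}=0$, i.e.\ to \eqref{condition-semi}. The main obstacle I foresee is not computational but bookkeeping: one must carefully pass between the frame index $1\le i\le n$ and the index $1\le \ell\le k$ of distinct principal normals, verifying that $\hat\psi$ acts by a single scalar on each $E_{\eta_\ell}$ so that the condition genuinely descends to a single relation $\langle\bar\eta_\ell,\bar\eta_{\ell'}\rangle+\theta_\ell+\theta_{\ell'}=0$ per pair of distinct Moebius principal normals.
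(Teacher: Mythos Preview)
Your proposal is correct and follows essentially the same approach as the paper: reduce $\bar R\cdot\beta=0$ via $R^{\perp}=0$ to $\beta(R^{*}(X_i,X_j)X_u,X_l)+\beta(X_u,R^{*}(X_i,X_j)X_l)=0$, evaluate in the simultaneous eigenframe, and use the conformal Gauss equation to obtain $K^{*}(X_i,X_j)(\bar\eta_i-\bar\eta_j)=0$, which is exactly your $C_{ij}(\bar\eta_i-\bar\eta_j)=0$. Your write-up is in fact slightly more thorough than the paper's (you verify $R^{*}(X_i,X_j)X_k=0$ for $k\notin\{i,j\}$ and address the converse and the frame-to-eigenbundle indexing explicitly), but the argument is the same.
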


\begin{proof}
    $f$ is Moebius semi-parallel if and only if $\beta(R^{*}(X_i,X_j)X_u, X_l)+\beta(X_u, R^{*}(X_i,X_j)X_l)=0$ for all $1 \le i,j,u,l \le n$, that is, $\langle R^{*}(X_i,X_j)X_u, X_l\rangle^{*}(\bar{\eta}_l-\bar{\eta}_u)= 0.$ Choosing $j=u$ and $i=l$ with $1 \le i \ne j \le n$, we get $(\langle R^{*}(X_i,X_j)X_j,X_i\rangle^{*})(\bar{\eta}_i-\bar{\eta}_j)=0.$ Then, \begin{equation}\label{curvature-condition}K^{*}(X_i,X_j)=0 \quad \mbox{or} \quad \bar{\eta}_i=\bar{\eta}_j \quad \forall \,\, 1 \le i \ne j \le n.\end{equation}	
 By the conformal Gauss equation, we have	$\langle \bar{\eta}_i, \bar{\eta}_j\rangle +\theta_i+\theta_j=0$ for all $1 \le i \ne j \le k$.
\end{proof}

The Lemma \ref{condition} shows that the number of distinct eigenvalues of the Blaschke tensor is at most $k$ and all Moebius flat umbilic-free submanifolds in the Euclidean space with flat normal bundle are also Moebius semi-parallel.

The following two results are a consequence of \eqref{condition-semi} and the conformal Gauss equation.

\begin{corollary}
Any umbilic-free surface $f:M^{2}\to \mathbb{R}^{m}$ with flat normal bundle is Moebius semi-parallel if and only if its Moebius sectional curvatures vanishes.
\end{corollary}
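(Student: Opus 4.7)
The plan is to imitate the proof of Lemma \ref{condition}, but carry out the index juggling directly in the $n=2$ case (which is not covered by that lemma's hypothesis $n\geq 3$) and exploit the fact that in dimension two the sectional curvature determines the full curvature tensor.

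First I would set up a suitable frame. Since $f$ is umbilic-free and $n=2$, at every point $f$ must admit two distinct principal normal vector fields $\eta_1,\eta_2$; otherwise $\alpha=\langle\cdot,\cdot\rangle\mathcal{H}$ at that point, contradicting umbilic-freeness. Flat normal bundle together with the conformal Ricci equation \eqref{R2} yields $[B_\xi,B_\eta]=0$ for all $\xi,\eta\in\Gamma(N_fM)$, so the $B_\xi$ form a commuting family of $\langle\cdot,\cdot\rangle^{*}$-self-adjoint endomorphisms and can be simultaneously diagonalised. This produces an orthonormal frame $X_1,X_2$ with respect to $\langle\cdot,\cdot\rangle^{*}$ such that $\beta(X_i,X_j)=\delta_{ij}\bar{\eta}_i$, where $\bar{\eta}_1\neq\bar{\eta}_2$ everywhere.

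For the forward implication I would evaluate the semi-parallel condition $(\bar{R}\cdot\beta)(X_1,X_2,X_2,X_1)=0$. Flat normal bundle kills the $R^{\perp}$-term, and on a surface $R^{*}(X_1,X_2)X_2=K^{*}X_1$ and $R^{*}(X_1,X_2)X_1=-K^{*}X_2$, where $K^{*}=K^{*}(X_1,X_2)$ is the Moebius sectional curvature. Substituting and using $\beta(X_1,X_2)=0$ reduces the identity to
\begin{equation*}
K^{*}(\bar{\eta}_2-\bar{\eta}_1)=0,
\end{equation*}
and since $\bar{\eta}_1\neq\bar{\eta}_2$ we conclude $K^{*}=0$.

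For the converse, on a $2$-dimensional Riemannian manifold the curvature tensor is completely determined by the sectional curvature, so $K^{*}\equiv0$ forces $R^{*}\equiv0$. Combined with $R^{\perp}=0$, the defining expression of $\bar{R}\cdot\beta$ collapses termwise to zero, giving Moebius semi-parallelism.

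I do not foresee a real obstacle: the only delicate point is justifying the diagonalising frame without appealing to Lemma \ref{condition} (which assumes $n\geq 3$ because the closedness of the Moebius form is invoked there), and this is handled directly via the Ricci equation since for $n=2$ commuting self-adjoint endomorphisms on a $2$-dimensional space are automatically simultaneously diagonalisable.
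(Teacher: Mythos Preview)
Your proof is correct and follows the paper's route: the paper derives the identity $K^{*}(X_i,X_j)(\bar\eta_i-\bar\eta_j)=0$ inside the proof of Lemma~\ref{condition} and simply specialises it to $n=2$, exactly as you do, with the converse being the trivial observation that $K^{*}\equiv0$ forces $R^{*}\equiv0$ on a surface. A small correction to your last paragraph: simultaneous diagonalisability of commuting self-adjoint operators holds in every dimension---what the $n\geq3$ hypothesis in Lemma~\ref{condition} actually buys is the closedness of $\omega$ needed to also diagonalise $\psi$, which you correctly never use here.
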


\begin{corollary}
	Let $f:M^{n}\to \mathbb{R}^{m}$, $n \geq 2$, be an umbilic-free Moebius semi-parallel isometric immersion with flat normal bundle. If $f$ has constant Moebius sectional curvature $c$, then $c=0$.
\end{corollary}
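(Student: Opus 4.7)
The plan is to combine the semi-parallel condition, in the equivalent form \eqref{curvature-condition} derived in the proof of Lemma~\ref{condition}, with the umbilic-free hypothesis, which guarantees that at least two distinct Moebius principal normal vector fields exist. The argument then essentially reduces to choosing a plane on which \eqref{curvature-condition} forces the Moebius sectional curvature to vanish, and comparing with the assumed constant value $c$.

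First, I would pick the orthonormal frame $\{X_1,\ldots,X_n\}$ with respect to $\langle\cdot,\cdot\rangle^{*}$ that simultaneously diagonalizes $\beta$ and $\psi$, as described in the paragraph preceding Lemma~\ref{condition}, so that $\beta(X_i,X_i)$ equals the Moebius principal normal $\bar{\eta}_i$ attached to the eigendistribution containing $X_i$. The computation carried out in the proof of Lemma~\ref{condition} used only the semi-parallel hypothesis (not the specific equation \eqref{condition-semi}) to establish the dichotomy \eqref{curvature-condition}: for every pair $i\neq j$, one has either $K^{*}(X_i,X_j)=0$ or $\bar{\eta}_i=\bar{\eta}_j$.

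Next, I would argue that the number $k$ of distinct Moebius principal normals satisfies $k\geq 2$. If instead $k=1$, then $\alpha(X,Y)=\langle X,Y\rangle\eta_1$ for all $X,Y\in TM^{n}$, so every point of $M^{n}$ would be umbilic, contradicting the umbilic-free hypothesis. Consequently, there exist indices $i\neq j$ such that $X_i$ and $X_j$ lie in eigendistributions of different Moebius principal normals, and since $\bar{\eta}_i-\bar{\eta}_j=\rho^{-1}(\eta_i-\eta_j)\neq 0$, we have $\bar{\eta}_i\neq\bar{\eta}_j$ for this pair.

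Finally, applying \eqref{curvature-condition} to the chosen pair forces $K^{*}(X_i,X_j)=0$. Since by hypothesis every Moebius sectional curvature equals the constant $c$, we conclude $c=0$. There is no real obstacle in this argument; it is a direct assembly of the dichotomy from Lemma~\ref{condition} and the trivial observation that umbilic-freeness excludes $k=1$.
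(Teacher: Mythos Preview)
Your argument is correct and follows exactly the route the paper indicates: the paper merely says that the corollary is ``a consequence of \eqref{condition-semi} and the conformal Gauss equation,'' and your proof spells this out via the equivalent dichotomy \eqref{curvature-condition}, together with the observation that umbilic-freeness forces $k\geq 2$, so that some pair $X_i,X_j$ with $\bar{\eta}_i\neq\bar{\eta}_j$ exists and hence $c=K^{*}(X_i,X_j)=0$. The only cosmetic point is that the simultaneous diagonalization of $\beta$ and $\psi$ quoted before Lemma~\ref{condition} is stated for $n\geq 3$, but your argument only uses a frame diagonalizing $\beta$, which is available from the flat normal bundle alone for every $n\geq 2$, so the $n=2$ case is covered as well.
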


The classification of submanifolds $f:M^{n}\to \mathbb{R}^{n+p}$, $n \geq 5$ and $2p \le n$, with flat normal bundle and vanishing Moebius sectional curvature was done in \cite{AT}.

\begin{proof}[{Proof of Theorem \ref{generic}}:]
Let $\bar{\eta}_1, \ldots, \bar{\eta}_n$ be the distinct Moebius principal normal vector fields of $f$ and let $\theta_1, \ldots, \theta_n$ be the eigenvalues of the Blaschke tensor. By Lemma \ref{condition} and the conformal Gauss equation, we have $K^{*}(X_i,X_j)=0$ for all $ 1 \le i \ne j \le n$.
\end{proof}

\section{Examples}\label{examples}

In this section, we recall the definition of some standard examples of submanifolds with flat normal bundle and classify the one with semi-parallelity of the Moebius second fundamental form.

\begin{example}\label{cylinder}
	The cylinder in $\mathbb{R}^{n+p}$ over a curve $\gamma:I \to \mathbb{R}^{p+1}$ is defined by $$f=\gamma \times \Id:I \times \mathbb{R}^{n-1}\to \mathbb{R}^{n+p},$$ where $\Id:\mathbb{R}^{n-1}\to \mathbb{R}^{n-1}$ is the identity map.

    The Moebius metric determined by $f$ is given by $$\langle \,\, , \, \rangle^{*}=\kappa^{2}(ds^{2}+du_2^{2}+\ldots+du_n^{2}),$$ where $s$ is the arc-lenght parameter of $\gamma$, $\kappa(s)$ is the first curvature function of $\gamma$ and $(u_2, \ldots, u_n)$ are canonical coordinates on $\mathbb{R}^{n-1}.$

    The Moebius second fundamental form and the Blaschke tensor of $f$ are written in the orthonormal basis $\{\frac{1}{\kappa}\frac{d}{ds}, \frac{1}{\kappa}\frac{d}{du_2}, \ldots, \frac{1}{\kappa}\frac{d}{du_n}\}$ as \begin{align*}
		\beta\left(\kappa^{-1} \frac{d}{ds}, \kappa^{-1}\frac{d}{ds}\right)&=\frac{n-1}{n}\xi \quad \mbox{and} \quad
		\beta \left(\kappa^{-1}\frac{d}{du_i}, \kappa^{-1}\frac{d}{du_i}\right)=-\frac{1}{n}\xi,\\
        \psi\left(\kappa^{-1}\frac{d}{ds}, \kappa^{-1}\frac{d}{ds}\right)&=-\frac{\kappa_{ss}}{\kappa^{3}}+\frac{3}{2}\frac{k_s^{2}}{\kappa^{4}}+\frac{2n-1}{2n^{2}} \quad \mbox{and} \quad 
        \psi\left(\kappa^{-1}\frac{d}{du_i}, \kappa^{-1}\frac{d}{du_i}\right)=-\frac{1}{2}\left(\frac{\kappa_s^{2}}{\kappa^{4}}+\frac{1}{n^{2}}\right),
	\end{align*} for all $2 \le i \le n$, where $\xi$ is a unit normal vector field of $f$.
\end{example}    

\begin{example}\label{cone1}
	The generalized cone in $\mathbb{R}^{n+p}$ over a curve $\gamma:I \to \mathbb{S}^{p+1}$ is the immersion $f:I \times \mathbb{H}^{n-1}\to  \mathbb{R}^{n+p}$ given by $$f(s, z)=\Theta \circ (\gamma, \Id)(s,z)=(z_1 \gamma(s), z_2,\ldots,z_{n-1}),$$ where $\mathbb{H}^{n-1}=\mathbb{R}_{+}\times \mathbb{R}^{n-2}$ is endowed with the hyperbolic metric $dz^{2}=\frac{1}{z_1^{2}}(dz_1^{2}+\ldots+dz_{n-1}^{2}),$ $Id:\mathbb{H}^{n-1}\to \mathbb{H}^{n-1}$ is the identity map and $\Theta: \mathbb{S}^{p+1}\times \mathbb{H}^{n-1}\to \mathbb{R}^{n+p}$ is the conformal diffeomorphism defined by $\Theta(y,z)=(z_1y, z_2, \ldots, z_{n-1}).$ 

    The Moebius metric determined by $f$ is given by $$\langle \,\,,\,\rangle^{*}=\kappa^{2}(ds^{2}+dz^{2}),$$ where $s$ is the arc-lenght parameter of $\gamma$ and $\kappa(s)$ is the first curvature function of $\gamma.$

    The Moebius second fundamental form and the Blaschke tensor of $f$ are written in the orthonormal basis $\{\kappa^{-1}\frac{d}{ds}, \kappa^{-1}z_1\frac{d}{dz_1}, \ldots, \kappa^{-1}z_1\frac{d}{dz_{n-1}}\}$ as 

    \begin{align*}
        \beta \left(\kappa^{-1}\frac{d}{ds}, \kappa^{-1}\frac{d}{ds}\right)&=\frac{n-1}{n}\xi \quad \mbox{and} \quad 
		\beta \left(\kappa^{-1}z_1\frac{d}{dz_i}, \kappa^{-1}z_1\frac{d}{dz_i}\right)=-\frac{1}{n}\xi,\\
        \psi \left(\kappa^{-1} \frac{d}{ds}, \kappa^{-1}\frac{d}{ds}\right)&=-\frac{\kappa_{ss}}{\kappa^{3}}+\frac{3}{2}\frac{\kappa_s^{2}}{\kappa^{4}}+\frac{1}{2\kappa^{2}}+\frac{2n-1}{2n^{2}},\\
        \psi \left(\kappa^{-1}z_1 \frac{d}{dz_i}, \kappa^{-1}z_1 \frac{d}{dz_i}\right)&=-\frac{1}{2}\left(\frac{\kappa_s^{2}}{\kappa^{4}}+\frac{1}{\kappa^{2}}+\frac{1}{n^{2}}\right),
    \end{align*} for all $1 \le i \le n-1$, where $\xi$ is a unit normal vector field of $f.$
\end{example}

\begin{example}\label{rotacao}
	The rotational submanifold over a curve $\gamma:I \to \mathbb{H}^{p+1}$ is the immersion $f:I \times \mathbb{S}^{n-1} \to \mathbb{R}^{n+p}$ defined by $$f(s,y)=\Theta \circ (\gamma, \Id)(s,y)=(z_1(s),\ldots, z_{p}(s), z_{p+1}(s)y),$$ where $\mathbb{H}^{p+1}=\{(z_1, \ldots, z_{p+1}): z_{p+1}>0\}$ is endowed with the hyperbolic metric $ds^{2}=\frac{1}{z_{p+1}^{2}}(dz_1^{2}+\ldots+dz_{p+1}^{2}),$ $\Id:\mathbb{S}^{n-1}\to \mathbb{S}^{n-1}$ is the identity map and $\Theta: \mathbb{H}^{p+1}\times \mathbb{S}^{n-1} \to \mathbb{R}^{n+p}$ is the conformal diffeomorphism $\Theta(z,y)=(z_1, \ldots, z_p, z_{p+1}y).$

    The Moebius metric determined by $f$ is $$\langle \,\,,\,\rangle^{*}=\kappa^{2}(\gamma^{*}ds^{2}+dy^{2}|_{\mathbb{S}^{n-1}}),$$ where $s$ is the arc-lenght parameter of $\gamma$ and $\kappa(s)$ is the first curvature function of $\gamma$.

    The Moebius second fundamental form and the Blaschke tensor of $f$ are written in the orthonormal basis $\{\kappa^{-1}\frac{d}{ds}, \kappa^{-1}e_1, \ldots, \kappa^{-1}e_{n-1}\}$ as \begin{align*}
	\beta \left(\kappa^{-1}\frac{d}{ds}, \kappa^{-1}\frac{d}{ds}\right)&=\frac{n-1}{n}\Theta_{*}\frac{\xi}{z_{p+1}} \quad \mbox{and} \quad 
	\beta \left(\kappa^{-1}e_i, \kappa^{-1}e_i\right)=-\frac{1}{n}\Theta_{*}\frac{\xi}{z_{p+1}},\\
    \psi \left(\kappa^{-1}\frac{d}{ds}, \kappa^{-1}\frac{d}{ds}\right)&=\frac{\kappa_{ss}}{\kappa^{3}}-\frac{5}{2}\frac{(\kappa_s)^{2}}{\kappa^{4}}-\frac{1}{2\kappa^{2}}+\frac{2n-1}{2n^{2}},\\
	\psi \left(\kappa^{-1}e_i, \kappa^{-1}e_i\right)&=-\frac{1}{2}\left[\frac{(\kappa_s)^{2}}{\kappa^{4}}-\frac{1}{\kappa^{2}}+\frac{1}{n^{2}}\right],
\end{align*} for all $1 \le i \le n-1$, where $e_1, \ldots, e_{n-1}$ is an orthonormal frame with respect to $dy^{2}|_{\mathbb{S}^{n-1}}$ and $\xi$ is a unit normal vector field of $\gamma.$
 \end{example}

The next lemma, classify the Moebius semi-parallel submanifold \begin{equation}\label{sub-examples}
    f=\Theta \circ (\gamma, \Id):I\times \mathbb{Q}^{n-1}_{c}\to \mathbb{R}^{n+p},
\end{equation} where $\Theta$ is as in Example \ref{cylinder}, \ref{cone1} and \ref{rotacao}, according to $c=0,$ $c=-1$ or $c=1$, respectively. 
 
\begin{lemma}\label{redutible-examples}
    Let $\gamma:I \to \mathbb{Q}^{p+1}_{-c}$ be a curve parametrized by arc length. Then the submanifold \eqref{sub-examples} is Moebius semi-parallel if and only if the first curvature function $\kappa(s)$ of $\gamma$ is given by
    \begin{equation}
      \kappa(s)=\begin{cases}
            b_1e^{b_2s}, \quad \mbox{if} \,\,\, c=0 \,\,\, \mbox{or} \,\,\, c=-1,\\
            \frac{1}{\sqrt{b_3s+b_4}}, \quad \mbox{if} \,\,\, c=1,
        \end{cases}
    \end{equation}
where $b_2,b_3, b_4\in \mathbb{R}$ and $b_1 \in \mathbb{R}_{+}$ are arbitrary constants.
\end{lemma}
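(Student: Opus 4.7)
The plan is to apply Lemma \ref{condition} to each of the three families in Examples \ref{cylinder}, \ref{cone1}, and \ref{rotacao}, reducing the semi-parallel condition to a scalar ODE on $\kappa$. First I would observe that in all three examples the formulas for $\beta$ show that, in the orthonormal frame $\{e_1, \dots, e_n\}$ with respect to $\langle \cdot, \cdot \rangle^{*}$, one has $\beta(e_1, e_1) = \frac{n-1}{n}\nu$ and $\beta(e_i, e_i) = -\frac{1}{n}\nu$ for $2 \le i \le n$, where $\nu$ is a unit normal field of $f$ (in Example \ref{rotacao} this uses that $\Theta$ is conformal with factor $z_{p+1}$, so that $\Theta_{*}(\xi/z_{p+1})$ is Euclidean-unit). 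Consequently, $f$ has exactly two distinct Moebius principal normal vector fields $\bar{\eta}_1 = \frac{n-1}{n}\nu$ and $\bar{\eta}_2 = -\frac{1}{n}\nu$, so $\langle \bar{\eta}_1, \bar{\eta}_2 \rangle = -\frac{n-1}{n^{2}}$. Writing $\theta_1 = \psi(e_1, e_1)$ and $\theta_2 = \psi(e_i, e_i)$ for $i \ge 2$, Lemma \ref{condition} collapses to the single scalar identity
\begin{equation*}
\theta_1 + \theta_2 = \tfrac{n-1}{n^{2}}.
\end{equation*}

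Next I would substitute the explicit expressions for the Blaschke eigenvalues given in each example. In the cylinder case $c = 0$ the constant term $\frac{2n-1}{2n^{2}} - \frac{1}{2n^{2}}$ equals $\frac{n-1}{n^{2}}$ and cancels with the right-hand side, leaving $\kappa \kappa_{ss} - \kappa_s^{2} = 0$. In the cone case $c = -1$ the additional terms $\frac{1}{2\kappa^{2}}$ in $\theta_1$ and $-\frac{1}{2\kappa^{2}}$ in $\theta_2$ cancel, producing the same ODE $\kappa \kappa_{ss} = \kappa_s^{2}$. In the rotational case $c = 1$ the sign of $\kappa_{ss}/\kappa^{3}$ is opposite and the combined coefficient of $\kappa_s^{2}/\kappa^{4}$ becomes $-3$, while the $\pm \frac{1}{2\kappa^{2}}$ terms still cancel; this gives
\begin{equation*}
\kappa \kappa_{ss} - 3\kappa_s^{2} = 0.
\end{equation*}

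Finally I would integrate these two ODEs. The equation $\kappa \kappa_{ss} = \kappa_s^{2}$ is equivalent to $(\log \kappa)_{ss} = 0$, whose general solution is $\kappa(s) = b_1 e^{b_2 s}$ with $b_1 \in \mathbb{R}_{+}$ and $b_2 \in \mathbb{R}$. The equation $\kappa \kappa_{ss} = 3 \kappa_s^{2}$ is equivalent, by direct differentiation, to $(\kappa^{-2})_{ss} = 0$, so $\kappa^{-2}(s) = b_3 s + b_4$ and hence $\kappa(s) = (b_3 s + b_4)^{-1/2}$. The converse direction (verifying that these $\kappa$'s produce Moebius semi-parallel submanifolds) is immediate from the same computation read backwards.

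The main technical point is not computational difficulty but rather the clean cancellation pattern: the ambient curvature contribution $\pm \frac{1}{2\kappa^{2}}$ in Examples \ref{cone1} and \ref{rotacao} must drop out between $\theta_1$ and $\theta_2$ for the semi-parallel condition to reduce to an ODE in $\kappa$ alone, and the sign reversal in the rotational case is what replaces the factor $1$ by $3$, yielding a rational rather than an exponential solution.
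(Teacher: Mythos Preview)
Your proposal is correct and follows essentially the same approach as the paper: both apply Lemma \ref{condition} to the explicit Moebius invariants computed in Examples \ref{cylinder}, \ref{cone1}, and \ref{rotacao}, arriving at exactly the ODEs $\frac{\kappa_{ss}}{\kappa^{3}}-\frac{\kappa_s^{2}}{\kappa^{4}}=0$ (for $c=0,-1$) and $\frac{\kappa_{ss}}{\kappa^{3}}-3\frac{\kappa_s^{2}}{\kappa^{4}}=0$ (for $c=1$). Your write-up simply makes explicit the cancellations and the integration steps that the paper leaves to the reader.
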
 

\begin{proof}
     It is sufficient to notice that from Lemma \ref{condition} and the Moebius invariants computed in Examples \ref{cylinder}, \ref{cone1} and \ref{rotacao}, we see that $f$ is Moebius semi-parallel if and only if \begin{equation}\label{edo}\begin{cases}
		\frac{\kappa_{ss}}{\kappa^{3}}-\frac{\kappa_s^{2}}{\kappa^{4}}=0, \quad \mbox{if}\,\, {c}=0 \,\,\mbox{or}\,\, {c}=-1,\\
		 \frac{\kappa_{ss}}{\kappa^{3}}-3\frac{\kappa_s^{2}}{\kappa^{4}}=0, \quad \mbox{if}\,\, {c}=1.
	\end{cases}\end{equation} Then the result follows.
\end{proof}

\section{Proof of Theorem \ref{classfication-two}} \label{first-thm}

In order to prove Theorem \ref{classfication-two} we need the next lemma, which is a generalization of Proposition 4.2 of \cite{AT}.
\begin{lemma}\label{class-two}
	Let $f:M^{n}\to \mathbb{R}^{n+p}$, $n \geq 3$ and $p \geq 1$, be an isometric immersion with flat normal bundle that has two distinct principal normal vector fields $\eta_1$ and $\eta_2$ of multiplicities $k$ and $n-k$, for $1 \le k \le n-2$, respectively. If $\omega^{f}$ is closed then $f(M^{n})$ is Moebius equivalent to an open subset of a cylinder, a generalized cone or a rotational submanifold over an isometric immersion $g:M^{k}\to \mathbb{Q}^{p+k}_{c}$, where $c=0,1$ or $-1$, respectively. In particular, if $k \geq 2$ then $g$ is umbilical.
\end{lemma}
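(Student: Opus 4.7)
The plan is to combine the closedness of $\omega^{f}$ with the two-principal-normal structure to produce two orthogonal umbilical distributions on $M^{n}$ equipped with additional rigidity, and then invoke a Hiepko/N\"olker type decomposition theorem to identify $f$, up to Moebius transformation, with one of the three conformal models appearing in Examples \ref{cylinder}, \ref{cone1} and \ref{rotacao}.

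First I would analyze the two eigenbundles $E_1, E_2 \subset TM$ of $\eta_1, \eta_2$, of ranks $k$ and $n-k \geq 2$. The standard Codazzi equation for $\alpha$, together with flatness of the normal bundle and $\dim E_2 \geq 2$, forces $E_2$ to be integrable with umbilical leaves in $M^{n}$ whose mean curvature normal lies in $E_1$; in the case $k \geq 2$ the analogous statement holds for $E_1$. Next, I would exploit the hypothesis that $\omega^{f}$ is closed. Via the conformal Ricci equation \eqref{R1}, $d\omega = 0$ is equivalent to $\beta(Y,\hat{\psi}X) = \beta(X,\hat{\psi}Y)$, so $\hat{\psi}$ commutes with every shape operator $B_{\xi}$ of $\beta$; in particular $\hat{\psi}$ is block-diagonal with respect to the splitting $E_1 \oplus E_2$. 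Feeding this information into the conformal Codazzi equations \eqref{C1} and \eqref{C2}, and computing the directional derivatives of the Moebius principal normals $\bar{\eta}_1, \bar{\eta}_2$ and of the eigenvalues of $\psi$ along leaves of $E_1$ and $E_2$, I would extract a scalar invariant $\varphi$ built from $\eta_1-\eta_2$ and $\mathcal{H}$ whose sign is locally constant on $M^{n}$ and discriminates among the three target models.

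With these structural ingredients assembled, I would apply a classical reducibility result for submanifolds with flat normal bundle and two principal normals inducing umbilical distributions, such as the warped product reduction used in Chapter~9 of \cite{DT}, to obtain a local product decomposition of $M^{n}$ through which $f$ factors. Composing with the conformal diffeomorphisms $\sigma$ and $\tau$ when needed, this factorization identifies $f(M^{n})$ with an open subset of $\Theta \circ (g, \Id)$ for some $g: M^{k} \to \mathbb{Q}^{p+k}_{c}$, with $c=0,-1,1$ exactly according to the three possible signs of $\varphi$, yielding the cylinder, generalized cone, and rotational submanifold respectively. When $k \geq 2$, the umbilicity of the distribution $E_1$ in $M^{n}$ transfers directly to umbilicity of $g$, because in each of the three models the second fundamental form of $f$ along $E_1$ is determined by the second fundamental form of $g$.

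The main obstacle is the middle step: correctly isolating the invariant $\varphi$, proving it has locally constant sign, and verifying that its three possible regimes match exactly the three ambient space forms produced by the reducibility theorem. A secondary difficulty is the careful bookkeeping required when translating back and forth between the submanifold in $\mathbb{R}^{n+p}$ and its images under $\sigma$ and $\tau$ in $\mathbb{S}^{n+p}$ or $\mathbb{H}^{n+p}$, since the conformal factor of these identifications interacts nontrivially with the Moebius invariants $\rho$, $\beta$, $\psi$, and $\omega$.
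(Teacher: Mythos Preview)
Your overall strategy is in the right spirit, but there is a concrete technical gap. The decisive property needed to invoke the classification theorem is not merely that the eigendistributions are umbilical: you must show that $E_{\eta_1}$ is \emph{spherical} in the metric induced by $f$, i.e. that its mean curvature vector in $M^{n}$ is parallel along its leaves in the direction of $E_{\eta_2}$. Umbilicity alone does not suffice for the reduction you have in mind, and nowhere in your outline do you isolate or prove this condition. In the paper, the spherical property is obtained in two steps: first one uses the conformal Codazzi equation to show that $E_{\eta_1}$ is totally geodesic for the Moebius metric $\langle\cdot,\cdot\rangle^{*}$, which under the conformal change translates into $E_{\eta_1}$ being umbilical for $\langle\cdot,\cdot\rangle_f$ with mean curvature $(\grad_f\log\rho)_{E_{\eta_2}}$; then one uses the definition \eqref{blaschke} of the Blaschke tensor --- specifically that the frame diagonalizing $\beta$ also diagonalizes $\psi$, hence $\Hess^{*}\rho(X_i,X_j)=0$ for $X_i\in E_{\eta_1}$, $X_j\in E_{\eta_2}$ --- to verify the parallelism of this mean curvature vector. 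Your proposal does not touch $\Hess^{*}\rho$ at all, and without it the spherical condition is unproved.

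A second issue is your plan to isolate a scalar invariant $\varphi$ whose sign distinguishes the cylinder, cone, and rotational cases. This is unnecessary and you give no indication of what $\varphi$ would be or why its sign is locally constant. The paper sidesteps this entirely: once $E_{\eta_1}$ is shown to be spherical and $E_{\eta_2}=\Delta$ is an eigendistribution of $\alpha$, the main theorem of \cite{umbilicity} applies directly and already delivers the trichotomy together with the Moebius equivalence. Your reference to a ``warped product reduction'' from Chapter~9 of \cite{DT} is in the right neighborhood, but the precise tool is the extrinsic-umbilical-foliation theorem of Dajczer--Florit--Tojeiro, and its hypotheses are exactly what your outline fails to establish.
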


\begin{proof}
    Since $\omega^{f}$ is closed and $f$ has two distinct principal normal vector fields $\eta_1$ and $\eta_2$ of multiplicities $k$ and $n-k$, respectively, the conformal Ricci equation implies that there exists an orthonormal frame $X_1, \ldots,X_n$ with respect to the Moebius metric that diagonalizes simultaneously the Moebius second fundamental form $\beta$ and the Blaschke tensor $\psi$ such that $$\beta(X_j,X_j)=\bar{\eta}_1 \quad \mbox{and} \quad \beta(X_i,X_i)=\bar{\eta}_2$$ for all $1 \le j \le k$ and $k+1 \le i \le n$.

	Consider the smooth distributions $$\Delta=\spa\{X_i:k+1 \le i \le n\} \quad \mbox{and} \quad \Delta^{\perp}=\spa\{X_i: 1 \le i \le k\}.$$ Since $\beta$ is traceless and $||\beta||_{*}^{2}=\frac{n-1}{n},$ we obtain $$k \bar{\eta}_1+(n-k)\bar{\eta}_2=0 \quad \mbox{and} \quad k||\bar{\eta}_1||^{2}+(n-k)||\bar{\eta}_2||^{2}=\frac{n-1}{n}.$$ Then $$||\bar{\eta}_1||=\sqrt{\frac{(n-1)(n-k)}{kn^{2}}} \quad \mbox{and} \quad ||\bar{\eta}_2||=\sqrt{\frac{k(n-1)}{n^{2}(n-k)}}.$$ In particular, $$\bar{\eta}_1=\sqrt{\frac{(n-1)(n-k)}{kn^{2}}}\xi_1 \quad \mbox{and} \quad \bar{\eta}_2=-\sqrt{\frac{k(n-1)}{n^{2}(n-k)}}\xi_1,$$ for some unit vector field $\xi_1\in \Gamma(N_fM).$

	The conformal Codazzi equation  
	$$(\nabla^{\perp}_{X_j}\beta)(X_i,X_i)-(\nabla^{\perp}_{X_i}\beta)(X_j,X_i)=\omega((X_j\wedge X_i)X_i)$$ for all $k+1\le i \ne j \le n$, implies $$ \omega(X_j)=\nabla^{\perp}_{X_j}\bar{\eta}_2.$$

	On the other hand, the equation $$(\nabla^{\perp}_{X_j}\beta)(X_i,X_l)-(\nabla^{\perp}_{X_i}\beta)(X_j,X_l)=\omega((X_j \wedge X_i)X_l), $$ for all $1 \le i,l \le k$ and $k+1 \le j \le n$, is equivalent to $$\langle X_i,X_l\rangle^{*}\omega(X_j)=\langle X_i,X_l\rangle^{*}\nabla^{\perp}_{X_j}\bar{\eta}_1+\langle \nabla^{*}_{X_i}X_l,X_j\rangle^{*}(\bar{\eta}_2-\bar{\eta_1}). $$ 	
	 Since $\bar{\eta}_1=\frac{k-n}{k}\bar{\eta}_2$, we have \begin{equation*}
		\langle X_i,X_l \rangle^{*}{\nabla^{\perp}_{X_j}\bar{\eta}_2}=\langle \nabla^{*}_{X_i}X_l, X_j \rangle^{*}\bar{\eta}_2.
	\end{equation*} Taking the inner product of the preceding equation with $\bar{\eta}_2$ we get \begin{equation*}
		\langle \nabla^{*}_{X_i}X_l, X_j \rangle^{*}=0,		\end{equation*} for all $1 \le i, l \le k$ and $k+1 \le j \le n.$ Therefore, $\Delta^{\perp}$ is totally geodesic with respect to the Moebius metric.

	Using the relation between the Riemannian 
	connections of the metrics  $\langle \,,\,\rangle_{f}$ and $\langle \,,\,\rangle^{*}$, $$\nabla^{*}_{X_i}X_l={^{f}\nabla_{X_i}}X_l+\frac{1}{\rho}(X_i(\rho )X_l+X_l(\rho)X_i-\langle X_i, X_l \rangle_{f}\,\grad_{f}\rho),$$ we obtain \begin{equation*}
		\langle {^{f}\nabla_{X_i}}X_l, X_j \rangle_{f}=\langle X_i, X_l\rangle_{f} \langle \grad_{f}\log \rho, X_j \rangle_{f}
	\end{equation*} for all $1 \le i, l \le k$ and $k+1 \le j \le n.$ 	Thus, $\Delta^{\perp}$ is an umbilic distribution with respect to the induced metric by $f$ whose mean curvature vector field is $$ (\grad_{f}\log \rho)_{\Delta}.$$

	Now we show that $\Delta^{\perp}$ is a spherical distribution with respect to the induced metric by $f$, that is, 
	\begin{equation}\label{spherical}
		\langle {^{f}\nabla_{X_i}}(\grad_{f} \log \rho )_{\Delta}, X_j \rangle_{f}=0, \quad \forall \,\,\, 1 \le i \le k \quad \mbox{and} \quad k+1 \le j \le n.
	\end{equation}

	It follows from the definition of the Blaschke tensor (see \eqref{blaschke}) and the fact that $X_1, \ldots, X_n$ diagonalizes $\beta$ and $\psi$ that $$ \Hess^{*}\rho(X_i,X_j)=0, \quad \forall \,\,\, 1 \le i \le k \quad \mbox{and} \quad k+1 \le j \le n.$$ 	Since \begin{equation*}
		\nabla^{*}_{X_i}\grad^{*}\rho= {^{f}\nabla_{X_i}}\grad^{*}\rho+\frac{1}{\rho}((\grad^{*}\rho)(\rho)X_i+ X_i(\rho)\grad^{*}\rho-\langle X_i, \grad^{*}\rho \rangle_{f}\grad_{f}\rho)
	\end{equation*} and $
	\grad^{*}\rho=\dfrac{\grad_{f}\rho}{\rho^{2}},
	$ we get \begin{equation*}
		0=  \Hess^{*}\rho(X_i,X_j)=\langle \nabla^{*}_{X_i}\grad^{*}\rho, X_j \rangle^{*}=\rho^{2}\langle {^{f}\nabla_{X_i}}\grad^{*}\rho, X_j \rangle_{f}.\end{equation*}
	However, \begin{equation*}
		\langle {^{f}\nabla_{X_i}}\grad^{*}\rho, X_j \rangle_{f}=\langle X_i(\rho^{-1})\grad_{f}\log \rho+\rho^{-1}{^{f}\nabla_{X_i}}\grad_{f}\log \rho, X_j \rangle_{f},
	\end{equation*} then \begin{equation}\label{esfera1}
		\langle ^{f}\nabla_{X_i}\grad_{f}\log \rho, X_j \rangle_{f}=X_i(\log \rho)X_j(\log \rho).\end{equation}
	On the other hand, using 
	$$\nabla^{*}_{X_i}X_j={^{f}\nabla_{X_i}}X_j+\frac{1}{\rho}(X_i(\rho)X_j+X_j(\rho)X_i)$$
	and  $\nabla^{*}_{X_i}X_j \in \Delta$, we have
	\begin{align}
		\langle ^{f}\nabla_{X_i}(\grad_{f}\log \rho)_{\Delta^{\perp}},X_j \rangle_{f}&=-\langle (\grad_{f}\log \rho)_{\Delta^{\perp}}, {^{f}\nabla_{X_i}}X_j \rangle_{f}\nonumber\\
		&=X_i(\log \rho)X_j(\log \rho). \label{esfera2}
	\end{align}
	Therefore, \eqref{spherical} follows from \eqref{esfera1} and \eqref{esfera2}.

	Since $\Delta=E_{\eta_2}$, the conclusion follows from the main theorem of \cite{umbilicity}.	
\end{proof}

\begin{proof}[Completion of the proof of Theorem \ref{classfication-two}:] In Proposition 3.2 of \cite{semi} was proved that $f$ has closed Moebius form. Then by Lemma \ref{class-two}, $f(M^{n})$ is locally Moebius equivalent to a cylinder, a generalized cone or a rotational submanifold over an isometric immersion $g:M^{k}\to \mathbb{Q}^{p+k}_c$, where $c=0,1$ or $-1$, respectively. When $k \geq 2$, these examples are stated in 
items \textbf{(i)}, \textbf{(ii)} and \textbf{(iii)} and they are examples of Moebius parallel hypersurfaces (see the main theorem of \cite{classification-mobius-parallel-hyper}). When $k=1$, the conclusion follows from Lemma \ref{redutible-examples}.
\end{proof}

\section{Proof of Theorem \ref{semi-constant}}\label{second-thm}
Let $f:M^{n}\to \mathbb{R}^{n+p}$, $n \geq 4$, be a proper isometric immersion with flat normal bundle and semi-parallel Moebius second fundamental form. Let $\bar{\eta}_1, \ldots, \bar{\eta}_k$, $k \geq 3$, be the distinct Moebius principal normal vector fields of $f$ of multiplicities $m_1, \ldots, m_k$ and assume that at least one of them has multiplicity greater than one, say, $m_k\geq 2.$ Since $d\omega=0,$ there exists an orthonormal frame $X_1, \ldots, X_n$ with respect to the Moebius metric such that for each $j \in \{1, \ldots, k\}$
$$\beta(X_i,X_i)=\bar{\eta}_j \quad \mbox{and} \quad \psi(X_i,X_i)=\theta_j,$$ for all $m_1+\ldots+m_{j-1}+1 \le i \le m_1+\ldots +m_j.$ Therefore, the eigenbundle distribution associated to $\eta_j$ is $E_{\eta_j}=\spa\{X_k\}_{k=m_1+\ldots+m_{j-1}+1}^{m_1+\ldots+m_j}$.

Assume $\bar{\eta}_i$ is parallel for all $1 \le i \le k$. Since $\langle \bar{\eta}_i, \bar{\eta}_j\rangle+\theta_i+\theta_j=0$ for all $1 \le i \ne j \le k$ and $k \geq 3$, we can easily get that $\theta_i$ is a constant function for all $1 \le i \le k.$ Due to the formula $\tr \, \psi = \frac{n^{2}s^{*}+1}{2n}$, where $s^{*}$ denotes the scalar curvature of the Moebius metric, we conclude that $s^{*}$ is a constant function.

\begin{proposition}\label{null-1}
   If $m_i \geq 2$ then $\omega(X_i)=0,$ for all $X_i \in E_{\eta_i}$.
\end{proposition}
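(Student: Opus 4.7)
The plan is a direct application of the conformal Codazzi equation \eqref{C1}. Since $m_i\geq 2$, the eigenbundle $E_{\eta_i}$ contains two distinct frame vectors $X_s,X_t$; by the simultaneous diagonalization of $\beta$ they satisfy $\beta(X_s,X_s)=\beta(X_t,X_t)=\bar{\eta}_i$ and $\beta(X_s,X_t)=0$. I would set $X=X_s$, $Y=X_t$, $Z=X_t$ in \eqref{C1}. By orthonormality $(X_s\wedge X_t)X_t=X_s$, so the right-hand side is simply $\omega(X_s)$, and the task reduces to showing that the left-hand side vanishes.

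Expanding the first term on the left gives
\[
({^f\nabla^{\perp}_{X_s}}\beta)(X_t,X_t)=\nabla^{\perp}_{X_s}\bar{\eta}_i-2\beta(\nabla^{*}_{X_s}X_t,X_t).
\]
The first summand vanishes by the parallelism hypothesis on $\bar{\eta}_i$. For the second, since $\beta(X_c,X_t)$ is nonzero only when $c=t$, only the $X_t$-component of $\nabla^{*}_{X_s}X_t$ contributes, and its coefficient equals $\tfrac12 X_s\langle X_t,X_t\rangle^{*}=0$. Expanding the second term, $\beta(X_s,X_t)=0$ leaves
\[
({^f\nabla^{\perp}_{X_t}}\beta)(X_s,X_t)=-\beta(\nabla^{*}_{X_t}X_s,X_t)-\beta(X_s,\nabla^{*}_{X_t}X_t).
\]
Each of these two pieces reduces to a single nonzero coefficient times $\bar{\eta}_i$, namely $\langle \nabla^{*}_{X_t}X_s,X_t\rangle^{*}\,\bar{\eta}_i$ and $\langle \nabla^{*}_{X_t}X_t,X_s\rangle^{*}\,\bar{\eta}_i$; their sum equals $X_t\langle X_s,X_t\rangle^{*}=0$, so the two Christoffel contributions cancel.

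Putting this together, \eqref{C1} yields $\omega(X_s)=0$, and since $X_s$ was an arbitrary frame vector in $E_{\eta_i}$, this proves $\omega|_{E_{\eta_i}}=0$. The argument is purely formal and uses only the parallelism of $\bar{\eta}_i$ together with the existence of a second frame vector in the same eigenbundle; the constancy of $\theta_i$ and of $s^{*}$ derived above is not needed. There is no real obstacle here, only careful bookkeeping of the covariant derivative of $\beta$ with respect to $\nabla^{*}$ and the use of orthonormality to kill the two Christoffel symbols that remain after applying the diagonalization of $\beta$.
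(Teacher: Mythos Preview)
Your proof is correct and follows essentially the same route as the paper: apply the conformal Codazzi equation \eqref{C1} to two distinct frame vectors $X_s,X_t\in E_{\eta_i}$ with $Z=X_t$, use the standing parallelism assumption $\nabla^{\perp}\bar{\eta}_i=0$ to kill the remaining normal-derivative term, and observe that the Christoffel contributions cancel by orthonormality. The paper's proof is simply the one-line version of your computation; your explicit bookkeeping (in particular the identification $\beta(Y,X_t)=\langle Y,X_t\rangle^{*}\bar{\eta}_i$ and the cancellation $\langle\nabla^{*}_{X_t}X_s,X_t\rangle^{*}+\langle\nabla^{*}_{X_t}X_t,X_s\rangle^{*}=X_t\langle X_s,X_t\rangle^{*}=0$) is exactly what is being suppressed there.
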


\begin{proof}
From equation $$(\nabla^{\perp}_{X_k}\beta)(X_j,X_j)-(\nabla^{\perp}_{X_j}\beta)(X_k,X_j)=\omega((X_k \wedge X_j)X_j),$$ for all $X_k,X_j \in E_{\eta_i}$, $k \ne j$, we obtain  $\omega(X_k)=0$.
\end{proof}

\begin{Remark}
    If $m_i \geq 2$ for all $1 \le i \le k$, then $\omega=0,$ and, therefore, according to the definition given in \cite{A}, $f$ is a Moebius isoparametric submanifold.
\end{Remark}

\begin{lemma}\label{possibilities}
At any point $p \in M^{n}$, $\Vert\Bar{\eta}_i\Vert^{2}+2\theta_i$ can be zero for at most one $i \in \{1, \ldots,k\}$.
\end{lemma}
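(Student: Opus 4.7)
The plan is to argue by contradiction using only Lemma \ref{condition} together with Cauchy--Schwarz and AM--GM. Suppose there exist two indices $i_0 \ne j_0$ in $\{1,\dots,k\}$ such that
\[
\|\bar{\eta}_{i_0}\|^{2}+2\theta_{i_0}=0 \quad \text{and} \quad \|\bar{\eta}_{j_0}\|^{2}+2\theta_{j_0}=0.
\]
Then $\theta_{i_0}=-\tfrac{1}{2}\|\bar{\eta}_{i_0}\|^{2}$ and $\theta_{j_0}=-\tfrac{1}{2}\|\bar{\eta}_{j_0}\|^{2}$.

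Next I would substitute into the semi-parallelism identity \eqref{condition-semi} from Lemma \ref{condition} applied to the pair $(i_0,j_0)$, which gives
\[
\langle \bar{\eta}_{i_0},\bar{\eta}_{j_0}\rangle = -\theta_{i_0}-\theta_{j_0}=\tfrac{1}{2}\bigl(\|\bar{\eta}_{i_0}\|^{2}+\|\bar{\eta}_{j_0}\|^{2}\bigr).
\]
Now I chain Cauchy--Schwarz with AM--GM:
\[
\langle \bar{\eta}_{i_0},\bar{\eta}_{j_0}\rangle \le \|\bar{\eta}_{i_0}\|\,\|\bar{\eta}_{j_0}\| \le \tfrac{1}{2}\bigl(\|\bar{\eta}_{i_0}\|^{2}+\|\bar{\eta}_{j_0}\|^{2}\bigr).
\]
The displayed equality of the first and last quantities forces equality throughout. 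Equality in AM--GM yields $\|\bar{\eta}_{i_0}\|=\|\bar{\eta}_{j_0}\|$, and the (positive-sign) equality case of Cauchy--Schwarz then yields $\bar{\eta}_{i_0}=\bar{\eta}_{j_0}$, contradicting the assumption that the Moebius principal normal vector fields $\bar{\eta}_1,\dots,\bar{\eta}_k$ are pairwise distinct. The degenerate sub-case $\|\bar{\eta}_{i_0}\|^{2}+\|\bar{\eta}_{j_0}\|^{2}=0$ is even simpler, since it gives $\bar{\eta}_{i_0}=\bar{\eta}_{j_0}=0$ directly.

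There is no real obstacle: the argument is a one-step consequence of \eqref{condition-semi} coupled with the two standard inequalities. The only point worth flagging is the equality-case analysis of Cauchy--Schwarz, and the distinctness of the $\bar{\eta}_i$ is precisely the hypothesis that closes the contradiction. No additional structure (parallelism of the $\bar{\eta}_i$, constancy of $s^{*}$, Proposition \ref{null-1}, etc.) is needed for this lemma; it is a purely pointwise algebraic observation which will presumably be exploited later to divide by $\|\bar{\eta}_i\|^{2}+2\theta_i$ on an open dense subset.
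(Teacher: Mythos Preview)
Your proof is correct and follows essentially the same pointwise algebraic idea as the paper: use \eqref{condition-semi} to relate $\langle \bar{\eta}_{i_0},\bar{\eta}_{j_0}\rangle$ to the two norms and conclude $\bar{\eta}_{i_0}=\bar{\eta}_{j_0}$. The paper's version is marginally more direct: it simply expands $\|\bar{\eta}_{i_0}-\bar{\eta}_{j_0}\|^{2}=(\|\bar{\eta}_{i_0}\|^{2}+2\theta_{i_0})+(\|\bar{\eta}_{j_0}\|^{2}+2\theta_{j_0})$, so no appeal to Cauchy--Schwarz or AM--GM is needed.
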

\begin{proof}
    Let $i \ne j \in \{1, \ldots,k\}$, then $\Vert\Bar{\eta}_i-\Bar{\eta}_j\Vert^{2}=\Vert\Bar{\eta}_i\Vert^{2}-2\langle \Bar{\eta}_i, \Bar{\eta}_j\rangle+\Vert\Bar{\eta}_j\Vert^{2}.$ Since $\langle \Bar{\eta}_i, \Bar{\eta}_j\rangle+\theta_i+\theta_j=0$ we have \begin{align}
        \Vert\Bar{\eta}_i-\Bar{\eta}_j\Vert^{2}&=\Vert\Bar{\eta}_i\Vert^{2}-2(-\theta_i-\theta_j)+\Vert\Bar{\eta}_j\Vert^{2}\nonumber\\
        &=(\Vert\Bar{\eta}_i\Vert^{2}+2\theta_i)+(\Vert\Bar{\eta}_j\Vert^{2}+2\theta_j).\label{differ-norm}
    \end{align} If $\Vert\Bar{\eta}_i\Vert^{2}+2\theta_i=0=\Vert\Bar{\eta}_j\Vert^{2}+2\theta_j$ then $\Bar{\eta}_i=\Bar{\eta}_j,$ which is a contradiction.
\end{proof}

We have two possibilities on an open subset $U$ of $M^{n}$:

\begin{description}
    \item[(a)] $\Vert\bar{\eta}_k\Vert^{2}+2\theta_k=0.$ 
    \item[(b)] $\Vert\bar{\eta}_k\Vert^{2}+2\theta_k \ne 0$.
\end{description}

Assume that case \textbf{(b)} occurs.

\begin{proposition}\label{null-2}
    $\omega(X_i)=0$ for all $X_i \in E_{\eta_k}^{\perp}.$
\end{proposition}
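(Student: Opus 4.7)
The plan is to combine the two conformal Codazzi equations \eqref{C1} and \eqref{C2}, evaluated on the triple $(X_a,X_b,X_b)$ where $X_a$ is any frame vector lying in some $E_{\eta_j}$ with $j\neq k$ and $X_b\in E_{\eta_k}$ is any frame vector (available since $m_k\geq 2$), in a way that isolates a scalar equation whose coefficient is precisely $\Vert\bar{\eta}_k\Vert^{2}+2\theta_k$. Since $E_{\eta_k}^{\perp}=\bigoplus_{j\neq k}E_{\eta_j}$ is generated by such frame vectors, it is enough to show $\omega(X_a)=0$ for each such $X_a$.

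Applying \eqref{C1} to $(X_a,X_b,X_b)$, I exploit that $\bar{\eta}_k$ is parallel and $X_b$ is $\langle\cdot,\cdot\rangle^{*}$-unit to conclude $(\nabla^{\perp}_{X_a}\beta)(X_b,X_b)=0$. Expanding the remaining term using the diagonalization $\beta(V,X_b)=\langle V,X_b\rangle^{*}\bar{\eta}_k$ and $\beta(V,X_a)=\langle V,X_a\rangle^{*}\bar{\eta}_j$, together with $\nabla^{*}$-compatibility of the Moebius metric, one finds $(\nabla^{\perp}_{X_b}\beta)(X_a,X_b)=\mu(\bar{\eta}_k-\bar{\eta}_j)$, where $\mu:=\langle \nabla^{*}_{X_b}X_b,X_a\rangle^{*}$. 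Since $(X_a\wedge X_b)X_b=X_a$, equation \eqref{C1} yields
\begin{equation*}
\omega(X_a)=\mu(\bar{\eta}_j-\bar{\eta}_k).
\end{equation*}

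Applying \eqref{C2} to the same triple, the right-hand side collapses to $-\langle \omega(X_a),\bar{\eta}_k\rangle$, because $\omega(X_b)=0$ by Proposition \ref{null-1} and $\beta(X_a,X_b)=0$. On the left, the constancy of $\theta_k$ and the unit length of $X_b$ give $(\nabla^{*}_{X_a}\psi)(X_b,X_b)=0$, while the same diagonalization argument yields $(\nabla^{*}_{X_b}\psi)(X_a,X_b)=\mu(\theta_k-\theta_j)$. Hence
\begin{equation*}
\mu(\theta_j-\theta_k)=-\langle \omega(X_a),\bar{\eta}_k\rangle.
\end{equation*}

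Finally, substituting the expression for $\omega(X_a)$ into this identity and using $\langle \bar{\eta}_j,\bar{\eta}_k\rangle+\theta_j+\theta_k=0$ from Lemma \ref{condition} collapses everything to $\mu(\Vert\bar{\eta}_k\Vert^{2}+2\theta_k)=0$. Under case \textbf{(b)} this coefficient is nonzero, forcing $\mu=0$, and therefore $\omega(X_a)=0$. The one point requiring care is that both Codazzi equations, when fed the same triple $(X_a,X_b,X_b)$, produce the \emph{same} scalar $\mu$ as the only free parameter; once this is arranged, combining them exhibits the Blaschke-type factor $\Vert\bar{\eta}_k\Vert^{2}+2\theta_k$ and the rest is routine bookkeeping.
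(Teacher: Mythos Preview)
Your proof is correct and follows essentially the same route as the paper: apply the two conformal Codazzi equations \eqref{C1} and \eqref{C2} to the triple $(X_a,X_b,X_b)$ with $X_a\in E_{\eta_j}$, $j\neq k$, and $X_b\in E_{\eta_k}$, obtain $\omega(X_a)=\mu(\bar{\eta}_j-\bar{\eta}_k)$ and $\mu(\theta_j-\theta_k)=-\langle\omega(X_a),\bar{\eta}_k\rangle$, and combine via Lemma~\ref{condition} to get $\mu(\Vert\bar{\eta}_k\Vert^{2}+2\theta_k)=0$. Your write-up is a bit more explicit about why $(\nabla^{\perp}_{X_a}\beta)(X_b,X_b)=0$ and $(\nabla^{*}_{X_a}\psi)(X_b,X_b)=0$ (using the parallelism of $\bar{\eta}_k$ and constancy of $\theta_k$), and your appeal to Proposition~\ref{null-1} for $\omega(X_b)=0$ is harmless but unnecessary, since the term $\langle\omega(X_b),\beta(X_a,X_b)\rangle$ already vanishes from $\beta(X_a,X_b)=0$.
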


\begin{proof}
The equation $$(\nabla^{\perp}_{X_i}\beta)(X_j,X_j)-(\nabla^{\perp}_{X_j}\beta)(X_i,X_j)=\omega((X_i\wedge X_j)X_j)$$ for $X_i \in E_{\eta_k}^{\perp}$ and $X_j \in E_{\eta_k}$, yields \begin{equation}\label{cd111}
    \omega(X_i)=\langle \nabla^{*}_{X_j}X_j,X_i\rangle^{*}(\bar{\eta}_i-\bar{\eta}_k). 
\end{equation} 
On the other hand, the equation $$(\nabla^{*}_{X_i}\psi)(X_j,X_j)-(\nabla^{*}_{X_j}\psi)(X_i,X_j)=-\langle \omega(X_i), \beta(X_j,X_j)$$ yields \begin{equation}\label{cd12}
    -\langle \omega(X_i), \bar{\eta}_k\rangle =\langle \nabla^{*}_{X_j}X_j,X_i\rangle^{*}(\theta_i-\theta_k).
\end{equation}
Taking the inner product of \eqref{cd111} with $\bar{\eta}_k$, adding it with \eqref{cd12} and using Lemma \ref{condition} we obtain $$\langle \nabla^{*}_{X_j}X_j,X_i\rangle^{*}(-\Vert\bar{\eta}_k\Vert^{2}-2\theta_k)=0,$$ which implies $\langle \nabla^{*}_{X_j}X_j,X_i\rangle^{*}=0.$ We conclude from \eqref{cd111} that $\omega(X_i)=0$. 
\end{proof}

\begin{Remark}
    If case \textbf{(b)} occurs, we conclude from Propositions \ref{null-1} and \ref{null-2} that $\omega=0.$
\end{Remark}

Assume that case \textbf{(a)} occurs. If at least $\dim E_{\eta_\ell}\geq 2$ for some $1 \le \ell \le k-1$, then Lemma \ref{possibilities} implies that $\Vert\bar{\eta}_\ell\Vert^{2}+2\theta_\ell \ne 0$ on $U.$ In this last case, we apply Propositions \ref{null-1} and \ref{null-2} to $\bar{\eta}_\ell$, from which we can conclude the vanishing of the Moebius form. If $m_i=1$ for all $1 \le i \le k-1$, then $f$ has vanishing Moebius curvature. In order to prove Theorem \ref{semi-constant}, we need to prove that if $f$ is a submanifold with vanishing Moebius curvature then $\omega=0.$

\begin{lemma}\label{orthogonal}
    If $\Vert\bar{\eta}_k\Vert^{2}+2\theta_k=0$ on $U$ then $k \le p+1$ and $$\langle \bar{\eta}_i-\bar{\eta}_k, \bar{\eta}_j-\bar{\eta}_k\rangle=0$$ for all $1 \le i \ne j \le k-1.$
\end{lemma}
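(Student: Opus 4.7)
The plan is to verify both claims by direct algebraic manipulation using the semi-parallel identity from Lemma \ref{condition}, namely $\langle \bar{\eta}_a, \bar{\eta}_b\rangle = -\theta_a - \theta_b$ for $1 \le a \ne b \le k$, together with the standing hypothesis $\|\bar{\eta}_k\|^2 + 2\theta_k = 0$ on $U$.

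For the orthogonality statement, I would take $i \ne j$ with $1 \le i, j \le k-1$ and simply expand
\begin{align*}
\langle \bar{\eta}_i - \bar{\eta}_k, \bar{\eta}_j - \bar{\eta}_k\rangle
&= \langle \bar{\eta}_i, \bar{\eta}_j\rangle - \langle \bar{\eta}_i, \bar{\eta}_k\rangle - \langle \bar{\eta}_k, \bar{\eta}_j\rangle + \|\bar{\eta}_k\|^2.
\end{align*}
Applying Lemma \ref{condition} to the three cross terms (noting $i \ne k$, $j \ne k$, $i \ne j$) replaces each inner product by a sum of $\theta$'s. All the $\theta_i$ and $\theta_j$ contributions cancel pairwise, leaving exactly $2\theta_k + \|\bar{\eta}_k\|^2$, which vanishes by assumption \textbf{(a)}. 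This gives the desired orthogonality relation.

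For the bound $k \le p+1$, I would argue that the $k-1$ normal vectors $\bar{\eta}_1 - \bar{\eta}_k, \ldots, \bar{\eta}_{k-1} - \bar{\eta}_k$ are pairwise orthogonal by what was just shown, and each is nonzero because the Moebius principal normal vector fields $\bar{\eta}_1, \ldots, \bar{\eta}_k$ are by hypothesis pairwise distinct. A family of pairwise orthogonal nonzero vectors is linearly independent, so $k - 1 \le \dim N_f M = p$, i.e., $k \le p+1$.

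The argument is entirely algebraic once Lemma \ref{condition} and hypothesis \textbf{(a)} are in hand, so I do not anticipate a real obstacle; the only thing to be careful about is keeping track of the indices to ensure the semi-parallel relation is applied only between distinct Moebius principal normal vector fields, which is automatic under the constraint $i \ne j$ and $i, j \le k-1 < k$.
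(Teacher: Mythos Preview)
Your proposal is correct and follows essentially the same argument as the paper: expand the inner product, apply the semi-parallel identity from Lemma~\ref{condition} to each cross term, and observe that everything cancels to $\Vert\bar{\eta}_k\Vert^{2}+2\theta_k=0$; then the $k-1$ nonzero pairwise orthogonal vectors $\bar{\eta}_i-\bar{\eta}_k$ force $k-1\le p$.
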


\begin{proof}
 Indeed, \begin{align*}
        \langle \bar{\eta}_i-\bar{\eta}_k, \bar{\eta}_j-\bar{\eta}_k\rangle&=\langle \bar{\eta}_i, \bar{\eta}_j\rangle-\langle \bar{\eta}_i, \bar{\eta}_k\rangle-\langle \bar{\eta}_k, \bar{\eta}_j\rangle+\Vert\bar{\eta}_k\Vert^{2}\\
        &=(-\theta_i-\theta_j)-(-\theta_i-\theta_k)-(-\theta_k-\theta_j)+\Vert\bar{\eta}_k\Vert^{2}\\
        &=\Vert\bar{\eta}_k\Vert^{2}+2\theta_k=0,
    \end{align*} for all $1 \le i \ne j  \le  k-1$. Therefore, we have $\{\bar{\eta}_i-\bar{\eta}_k: 1 \le i \le k-1\}$ is a linearly independent subset of $T^{\perp}M$. Thus $k-1 \le p.$
\end{proof}

\begin{lemma}\label{r.o.}
    There exists an orthonormal subset $\{\xi_i\}_{i=1}^{k-1}$ of $\Gamma(T^{\perp}M)$ such that \begin{equation}
        \bar{\eta}_i-\bar{\eta}_k=\sqrt{\Vert\bar{\eta}_i\Vert^{2}+2\theta_i}\xi_i,
    \end{equation} for all $1 \le i \le k-1.$
\end{lemma}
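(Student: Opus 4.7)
The plan is to realize the vectors $\bar{\eta}_i-\bar{\eta}_k$, $1\le i\le k-1$, as an orthogonal family and then simply rescale to unit length, using the case \textbf{(a)} hypothesis $\Vert\bar{\eta}_k\Vert^{2}+2\theta_k=0$ to identify the scaling factor. The orthogonality half is already given by Lemma \ref{orthogonal}, so the only genuine work is to compute $\Vert\bar{\eta}_i-\bar{\eta}_k\Vert^{2}$ and then verify positivity.

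First, I would apply identity \eqref{differ-norm} with $j=k$, namely
\[
\Vert\bar{\eta}_i-\bar{\eta}_k\Vert^{2}=(\Vert\bar{\eta}_i\Vert^{2}+2\theta_i)+(\Vert\bar{\eta}_k\Vert^{2}+2\theta_k),
\]
(which itself follows from the semi-parallelism relation $\langle\bar{\eta}_i,\bar{\eta}_k\rangle=-\theta_i-\theta_k$ of Lemma \ref{condition}). The case \textbf{(a)} hypothesis makes the second parenthesis vanish, yielding $\Vert\bar{\eta}_i-\bar{\eta}_k\Vert^{2}=\Vert\bar{\eta}_i\Vert^{2}+2\theta_i$. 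Since $\bar{\eta}_i\neq\bar{\eta}_k$ by the distinctness of the Moebius principal normals, the left-hand side is strictly positive, so the square root $\sqrt{\Vert\bar{\eta}_i\Vert^{2}+2\theta_i}$ is well defined and nonzero on $U$.

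Next I would set
\[
\xi_i:=\frac{\bar{\eta}_i-\bar{\eta}_k}{\sqrt{\Vert\bar{\eta}_i\Vert^{2}+2\theta_i}},\qquad 1\le i\le k-1,
\]
so that $\Vert\xi_i\Vert=1$ by the norm computation above and $\bar{\eta}_i-\bar{\eta}_k=\sqrt{\Vert\bar{\eta}_i\Vert^{2}+2\theta_i}\,\xi_i$ by construction. For $i\neq j$, Lemma \ref{orthogonal} gives $\langle\bar{\eta}_i-\bar{\eta}_k,\bar{\eta}_j-\bar{\eta}_k\rangle=0$, hence $\langle\xi_i,\xi_j\rangle=0$, and the family $\{\xi_i\}_{i=1}^{k-1}\subset\Gamma(T^\perp M)$ is orthonormal.

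There is no real obstacle: the whole statement is essentially a Gram–Schmidt-free normalization of the already orthogonal system $\{\bar{\eta}_i-\bar{\eta}_k\}_{i=1}^{k-1}$. The only mild subtlety is checking that the scaling constants are nonzero, which is immediate from the distinctness of the $\bar{\eta}_i$'s combined with the computed norm formula.
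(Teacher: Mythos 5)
Your proposal is correct and follows essentially the same route as the paper: the author likewise invokes identity \eqref{differ-norm} together with the case \textbf{(a)} hypothesis to get $\Vert\bar{\eta}_i-\bar{\eta}_k\Vert^{2}=\Vert\bar{\eta}_i\Vert^{2}+2\theta_i$ and then cites Lemma \ref{orthogonal} for orthogonality. Your explicit check that the scaling factor is nonzero (via distinctness of the $\bar{\eta}_i$) is a small but welcome addition that the paper leaves implicit.
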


\begin{proof}
As we observe in \eqref{differ-norm}, $\Vert\bar{\eta}_i-\bar{\eta}_k\Vert^{2}=\Vert\bar{\eta}_i\Vert^{2}+2\theta_i$ for all $1 \le i \le k-1$. The conclusion follows from Lemma \ref{orthogonal}. 
\end{proof}

In the sequence, we denote $\sqrt{\Vert\bar{\eta}_i\Vert+2\theta_i}$ by $h_i.$

\begin{proposition}
Let $f:M^{n}\to \mathbb{R}^{n+p}$, $n \geq 4$, be an isometric immersion with vanishing Moebius curvature and flat normal bundle. If $f$ has at least three distinct parallel Moebius principal normal vector fields with at least one of them with multiplicity greater than one then $f$ is Moebius isoparametric, and, hence, $f(M^{n})$ is locally Moebius equivalent to one of the following submanifolds 

$$\mathbb{S}^{1}(r_1)\times \ldots \times \mathbb{S}^{1}(r_{n-\lambda})\times \mathbb{R}^{\lambda} \subset \mathbb{R}^{2n-\lambda}$$ for $2 \le \lambda \le n-2$.
\end{proposition}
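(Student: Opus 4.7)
The plan is to establish that the Moebius form $\omega$ vanishes identically, thereby upgrading $f$ to a Moebius isoparametric submanifold in the sense of \cite{A}, and then to invoke the classification of Moebius isoparametric submanifolds with vanishing Moebius curvature given there.

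Proposition \ref{null-1} applied to $\bar{\eta}_k$ already yields $\omega(X_j) = 0$ for every $X_j \in E_{\eta_k}$. The remaining task is to show $\omega(X_a) = 0$ for $X_a \in E_{\eta_a}$ with $a \in \{1,\ldots,k-1\}$. Since $k \geq 3$, for each such $a$ I can pick another index $i \in \{1,\ldots,k-1\}\setminus\{a\}$ and a unit vector $X_i \in E_{\eta_i}$. Applying the conformal Codazzi equation
$$(\nabla^{\perp}_{X_i}\beta)(X_a, X_i) - (\nabla^{\perp}_{X_a}\beta)(X_i, X_i) = \omega\bigl((X_i \wedge X_a)X_i\bigr),$$
and using the parallelism of $\bar{\eta}_i$ together with $\langle \nabla^{*}_{X_a}X_i, X_i\rangle^{*} = 0$ to kill the second left-hand term, then expanding the first in the diagonalising basis via $\langle \nabla^{*}_{X_i}X_a, X_i\rangle^{*} = -\langle \nabla^{*}_{X_i}X_i, X_a\rangle^{*}$, I obtain
$$\omega(X_a) = \langle \nabla^{*}_{X_i}X_i, X_a\rangle^{*}\bigl(\bar{\eta}_a - \bar{\eta}_i\bigr).$$

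Comparing this with the identity $\omega(X_a) = \langle \nabla^{*}_{X_j}X_j, X_a\rangle^{*}(\bar{\eta}_a - \bar{\eta}_k)$ for any $X_j \in E_{\eta_k}$ (derived exactly as equation \eqref{cd111}), I take the inner product of both expressions with $\bar{\eta}_i - \bar{\eta}_k$. By Lemma \ref{orthogonal} the coefficient on the second expression is zero, while a short computation using $\langle \bar{\eta}_s, \bar{\eta}_t\rangle + \theta_s + \theta_t = 0$ gives $\langle \bar{\eta}_a - \bar{\eta}_i, \bar{\eta}_i - \bar{\eta}_k\rangle = -h_i^{2}$. Since $h_i \neq 0$ (otherwise $\bar{\eta}_i = \bar{\eta}_k$), this forces $\langle \nabla^{*}_{X_i}X_i, X_a\rangle^{*} = 0$, and hence $\omega(X_a) = 0$. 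Combined with Proposition \ref{null-1}, we conclude $\omega \equiv 0$; together with the parallelism of the $\bar{\eta}_i$'s and the constancy of the $\theta_i$'s already derived, $f$ is Moebius isoparametric, and the classification in \cite{A} singles out the product $\mathbb{S}^{1}(r_1)\times\cdots\times\mathbb{S}^{1}(r_{n-\lambda})\times\mathbb{R}^{\lambda}$ with $\lambda = m_k \geq 2$ and $n - \lambda = k-1 \geq 2$. The chief obstacle is the cancellation step in the preceding paragraph: the orthogonality of $\{\bar{\eta}_i - \bar{\eta}_k\}_{i<k}$ from Lemma \ref{orthogonal} is precisely what makes one of the two Codazzi identities vanish; without a third distinct principal normal only the identity via $X_j \in E_{\eta_k}$ would be available, and case (a) would be compatible with a nontrivial Moebius form.
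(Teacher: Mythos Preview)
Your argument is correct and follows essentially the same route as the paper's proof: both derive two expressions for $\omega(X_a)$ via the conformal Codazzi equation---one using a vector in $E_{\eta_k}$ and one using a vector in some other $E_{\eta_i}$ with $i\in\{1,\dots,k-1\}\setminus\{a\}$---and then exploit the orthogonality of the family $\{\bar\eta_i-\bar\eta_k\}_{i<k}$ to force the connection coefficient, and hence $\omega(X_a)$, to vanish; the paper phrases this via the orthonormal frame $\{\xi_i\}$ of Lemma~\ref{r.o.} and compares $\xi$-components, whereas you take the inner product with $\bar\eta_i-\bar\eta_k$ directly, which is the same computation. One small point you leave implicit: your appeals to Lemma~\ref{orthogonal} and to ``$h_i\neq 0$ (otherwise $\bar\eta_i=\bar\eta_k$)'' both require $\|\bar\eta_k\|^2+2\theta_k=0$, which follows immediately from the vanishing Moebius curvature hypothesis applied to two distinct vectors in $E_{\eta_k}$ (possible since $m_k\ge 2$) via the conformal Gauss equation.
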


\begin{proof}
By the conformal Codazzi equation $$(\nabla^{\perp}_{X_i}\beta)(X_j,X_j)-(\nabla^{\perp}_{X_j}\beta)(X_i,X_j)=\omega((X_i\wedge X_j)X_j),$$ for all $1 \le i \le k-1$ and $k \le j \le n,$ we obtain $$\omega(X_i)=\langle \nabla^{*}_{X_j}X_j,X_i\rangle^{*}(\bar{\eta}_i-\bar{\eta}_k).$$ Using Lemma \ref{r.o.} and denoting by $\delta$ the mean cuvature vector of $E_{\eta_k}$ with respect to $\langle \,\,,\,\rangle^{*}$, we get $$\omega(X_i)=\langle \delta, X_i\rangle^{*}h_i \xi_i.$$  On the other hand, the conformal Codazzi equation $$(\nabla^{\perp}_{X_i}\beta)(X_\ell,X_\ell)-(\nabla^{\perp}_{X_\ell}\beta)(X_i,X_\ell)=\omega((X_i\wedge X_\ell)X_\ell)$$ for all $1 \le i \ne \ell \le k-1$, implies $$\omega(X_i)=\langle \nabla^{*}_{X_\ell}X_\ell, X_i\rangle^{*}(h_i\xi_i-h_\ell \xi_\ell).$$ Therefore, $\langle \nabla^{*}_{X_\ell}X_\ell,X_i\rangle^{*}=\langle \delta, X_i\rangle^{*}=0$. Then $\omega(X_i)=0,$ for all $1 \le i \le k-1.$

By Proposition \ref{null-1}, $\omega|_{E_{\eta_k}}=0.$ Therefore, we have proved that $\omega=0.$ The conclusion follows from Theorem 1.1 of \cite{A}.
\end{proof}

\begin{proof}[Completion of the proof of Theorem \ref{semi-constant}:]
   In order to prove this result, we need to show \begin{equation}\label{parallel}
   (\nabla^{\perp}_{X_i}\beta)(X_j,X_\ell)=0
   \end{equation} for all $1 \le i,j,\ell \le n.$ Indeed, if $j=\ell$, then \eqref{parallel} is trivially satisfied. Since we have proved that $\omega=0$, the conformal Codazzi equation becomes $(\nabla^{\perp}_{X}\beta)(Y,Z)=(\nabla^{\perp}_{Y}\beta)(X,Z)$ for all $X,Y,Z \in \mathfrak{X}(M).$ Assume that $j \ne \ell$, then $(\nabla^{\perp}_{X_i}\beta)(X_j,X_\ell)=(\nabla^{\perp}_{X_j}\beta)(X_i,X_\ell)$ implies \begin{equation}\label{cd11}\langle \nabla^{*}_{X_i}X_j,X_\ell\rangle^{*}(\bar{\eta}_j-\bar{\eta}_\ell)=\langle \nabla^{*}_{X_j}X_i,X_\ell\rangle^{*}(\bar{\eta}_i-\bar{\eta}_\ell).\end{equation} If $\bar{\eta}_i=\bar{\eta}_j \ne \bar{\eta}_\ell$, then $\langle \nabla^{*}_{X_i}X_j,X_\ell\rangle^{*}=0,$ because as proved in Lemma 3.1 of \cite{A}, $E_{\eta_i}$ is totally geodesic with respect to $\langle \,\,,\,\rangle^{*},$ therefore, $(\nabla^{\perp}_{X_i}\beta)(X_j,X_\ell)=0.$ If $\bar{\eta}_i=\bar{\eta}_\ell$, then $(\nabla^{\perp}_{X_i}\beta)(X_j,X_\ell)=0.$ Now, assume that $\bar{\eta}_i \ne \bar{\eta}_j \ne \bar{\eta}_\ell \ne \bar{\eta}_i$, then by the equation $(\nabla^{*}_{X_i}\psi)(X_j,X_\ell)=(\nabla^{*}_{X_j}\psi)(X_i,X_\ell)$ we get \begin{equation}\label{cd222}\langle \nabla^{*}_{X_i}X_j,X_\ell\rangle^{*}(\theta_j-\theta_\ell)=\langle \nabla^{*}_{X_j}X_i,X_\ell \rangle^{*}(\theta_i-\theta_\ell).\end{equation} By Lemma \ref{possibilities}, we can assume that $\Vert\bar{\eta}_j\Vert^{2}+2\theta_j \ne 0$, then taking the inner product of \eqref{cd11} with $\bar{\eta}_j$, adding it with \eqref{cd222} and using Lemma \ref{condition} we obtain $\langle \nabla^{*}_{X_i}X_j,X_\ell\rangle^{*}=0,$ from which we conclude $(\nabla^{\perp}_{X_i}\beta)(X_j,X_\ell)=0.$ This completes the proof of \eqref{parallel}.
\end{proof}

\section{Proof of Theorem \ref{three-normals}}\label{third-thm}

We study proper umbilic-free isometric immersions with flat normal bundle and semi-parallel Moebius second fundamental form provided the Moebius principal normal vector fields are not necessarily parallel along the normal connection of $f.$

Throughout this section, we consider the notation in the first 
paragraph of the preceding section. According to Lemma \ref{possibilities}, we have two cases to consider on an open subset $U$ of $M^{n}$. 

\begin{description}
    \item[(i)] $ \Vert\bar{\eta}_i\Vert^{2}+2\theta_i \ne 0$ for all $1 \le i \le k.$
    \item[(ii)] There is exactly one $1 \le i \le k$ such that $\Vert\bar{\eta}_i\Vert^{2}+2\theta_i =0.$
\end{description}

\textbf{Case (i)}
 We observe that this is the only possibility in the hypersurface case. Indeed, it was proved in \cite{semi} that an umbilic-free hypersurface $f:M^{n}\to \mathbb{R}^{n+1},$ $n \geq 4$, with semi-parallel Moebius second fundamental form can have at most three distinct Moebius principal curvatures. If $f$ has three distinct Moebius principal curvatures $\bar{\lambda}_1, \bar{\lambda}_2$ and $\bar{\lambda}_3$ and for example $\bar{\lambda}_1^{2}+2\theta_1=0,$ then \begin{align*}(\bar{\lambda}_2-\bar{\lambda}_1)(\bar{\lambda}_3-\bar{\lambda}_1)&=\bar{\lambda}_2\bar{\lambda}_3-\bar{\lambda}_2 \bar{\lambda}_1-\bar{\lambda}_1 \bar{\lambda}_3 + \bar{\lambda}_1^{2}\\
&=-\theta_2-\theta_3+\theta_2+\theta_1+\theta_1+\theta_3+\bar{\lambda}_1^{2}=0,
\end{align*} which is a contradiction.

\begin{lemma}\label{geodesic}
    If $m_i \geq 2$ then $X_{\ell}(\Vert\bar{\eta}_i\Vert^{2}+2\theta_i)=0$ for all $X_\ell \in E_{\eta_i}.$
\end{lemma}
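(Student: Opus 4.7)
The plan is to exploit the two Codazzi equations (for $\beta$ and for $\psi$) applied to pairs of orthonormal vectors inside the eigenbundle $E_{\eta_i}$. The assumption $m_i\geq 2$ is exactly what guarantees that such a pair exists.

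Fix $X_\ell, X_a \in E_{\eta_i}$ with $\ell \ne a$, both unit vectors in $\langle\cdot,\cdot\rangle^*$. First, I would unpack the conformal Codazzi equation \eqref{C1} on the triple $(X_\ell, X_a; X_a)$. Since $\beta(X_b,X_a)=\theta_a^{(\beta)}\delta_{ab}\bar\eta_i$ vanishes unless $b=a$ (using that $E_{\eta_i}$ is an eigenspace of $\beta$) and since $\langle\nabla^*_{X_\ell}X_a,X_a\rangle^*=\frac12 X_\ell(|X_a|^{*2})=0$, the correction terms in $(\nabla^\perp_{X_\ell}\beta)(X_a,X_a)$ collapse and one obtains $(\nabla^\perp_{X_\ell}\beta)(X_a,X_a)=\nabla^\perp_{X_\ell}\bar\eta_i$. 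A parallel computation for $(\nabla^\perp_{X_a}\beta)(X_\ell,X_a)$ uses $\beta(X_\ell,X_a)=0$ together with the identity $\langle\nabla^*_{X_a}X_\ell,X_a\rangle^*=-\langle X_\ell,\nabla^*_{X_a}X_a\rangle^*$, making the two remaining correction terms cancel. Since $\omega((X_\ell\wedge X_a)X_a)=\omega(X_\ell)$, the upshot is
\begin{equation*}
\nabla^\perp_{X_\ell}\bar\eta_i=\omega(X_\ell) \quad \text{for all } X_\ell\in E_{\eta_i}.
\end{equation*}

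Next, I would apply the conformal Codazzi equation \eqref{C2} for $\psi$ to the triple $(X_a, X_\ell; X_a)$. Because $\psi$ is diagonalized by the same frame with $\psi(X_a,X_a)=\theta_i$ for every $X_a\in E_{\eta_i}$, an analogous bookkeeping shows $(\nabla^*_{X_a}\psi)(X_\ell,X_a)=0$ and $(\nabla^*_{X_\ell}\psi)(X_a,X_a)=X_\ell(\theta_i)$. The right-hand side of \eqref{C2} becomes $\langle\omega(X_\ell),\bar\eta_i\rangle$ since $\beta(X_\ell,X_a)=0$. Substituting the first step yields
\begin{equation*}
-X_\ell(\theta_i)=\langle\omega(X_\ell),\bar\eta_i\rangle=\langle\nabla^\perp_{X_\ell}\bar\eta_i,\bar\eta_i\rangle=\tfrac12 X_\ell(\|\bar\eta_i\|^2),
\end{equation*}
which rearranges to $X_\ell(\|\bar\eta_i\|^2+2\theta_i)=0$, as desired.

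The calculation is essentially bookkeeping of Christoffel-type terms, so no conceptual obstacle appears; the only place care is needed is to verify that every cross-term $\beta(X_b,X_a)$ or $\psi(X_b,X_a)$ with $b\neq a$ inside $E_{\eta_i}$ really vanishes (it does, since within a single eigenbundle the tensors are scalar multiples of the metric), which is why the hypothesis $m_i\geq 2$ is used precisely to furnish a companion vector $X_a\in E_{\eta_i}$ distinct from $X_\ell$.
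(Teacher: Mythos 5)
Your proposal is correct and follows essentially the same route as the paper: apply the conformal Codazzi equation for $\beta$ to a pair $X_\ell,X_a\in E_{\eta_i}$ to get $\nabla^{\perp}_{X_\ell}\bar{\eta}_i=\omega(X_\ell)$, apply the Codazzi equation for $\psi$ to the same pair to get $X_\ell(\theta_i)=-\langle\omega(X_\ell),\bar{\eta}_i\rangle$, and combine by taking the inner product with $\bar{\eta}_i$. The bookkeeping of the correction terms is verified correctly, and the role of $m_i\geq 2$ is identified exactly as in the paper.
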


\begin{proof}
    By the conformal Codazzi equation 
    $$(\nabla^{*}_{X_\ell}\psi)(X_j,X_j)-(\nabla^{*}_{X_j}\psi)(X_\ell,X_j)=\langle \omega(X_j), \beta(X_\ell,X_j)\rangle-\langle \omega(X_\ell), \beta(X_j,X_j)\rangle$$ for all $X_\ell, X_j \in E_{\eta_i}$, $\ell \ne j$, we obtain \begin{equation}\label{cd2}
        X_\ell(\theta_i)=-\langle \omega(X_\ell), \bar{\eta}_i\rangle.
    \end{equation}
The equation $$(\nabla^{\perp}_{X_\ell}\beta)(X_j,X_j)-(\nabla^{\perp}_{X_j}\beta)(X_\ell,X_j)=\omega((X_\ell\wedge X_j)X_j)$$ yields \begin{equation}\label{cdij}
    \nabla^{\perp}_{X_\ell}\bar{\eta}_i=\omega(X_\ell).
\end{equation} Taking the inner product of \eqref{cdij} with $\bar{\eta}_i$ and adding it with \eqref{cd2} we get  $X_\ell(\Vert\bar{\eta}_i\Vert^{2}+2\theta_i)=0$ for all $X_\ell \in E_{\eta_i}.$\end{proof}

\begin{lemma}\label{umbilical}
    $E_{\eta_i}$ is an umbilical distribution with mean curvature vector field $$-(\grad^{*}(\log |\Vert \bar{\eta}_i \Vert^{2}+2\theta_i|^{-\frac{1}{2}}))_{E_{\eta_i}^{\perp}}$$ for all $1 \le i \le k$.
\end{lemma}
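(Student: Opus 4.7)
The strategy is to compute, in the orthonormal frame $\{X_a\}_{a=1}^n$ that simultaneously diagonalizes $\beta$ and $\psi$, the Christoffel coefficients $\Gamma_{j\ell}^m := \langle \nabla^{*}_{X_j}X_\ell, X_m\rangle^{*}$ for indices with $X_j, X_\ell \in E_{\eta_i}$ and $X_m \in E_{\eta_{i'}}$, $i' \neq i$. Umbilicity of $E_{\eta_i}$ together with the stated formula for the mean curvature vector is equivalent to proving (a) $\Gamma_{j\ell}^m = 0$ whenever $j \neq \ell$, and (b) $\Gamma_{jj}^m = X_m(f_i)/(2f_i)$ is independent of $j$, where $f_i := \Vert\bar{\eta}_i\Vert^{2}+2\theta_i$. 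The basic algebraic tool throughout is that orthonormality of $\{X_a\}$ with respect to $\langle \cdot,\cdot\rangle^{*}$ forces $\Gamma_{ab}^c + \Gamma_{ac}^b = 0$, and in particular $\Gamma_{ab}^b = 0$, which will repeatedly collapse Christoffel expressions below.

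For (a), I apply the conformal Codazzi equation \eqref{C1} to the triple $(X_j, X_m, X_\ell)$. Since $\ell$ differs from both $j$ and $m$, one has $(X_j \wedge X_m)X_\ell = 0$, so the right-hand side vanishes. Expanding $(\nabla^{\perp}_{X_m}\beta)(X_j, X_\ell)$ in the diagonalizing frame, every surviving term has coefficient $(\Gamma_{mj}^\ell + \Gamma_{m\ell}^j)\bar{\eta}_i$, which is zero by orthonormality. Expanding $(\nabla^{\perp}_{X_j}\beta)(X_m, X_\ell)$ and substituting $\Gamma_{jm}^\ell = -\Gamma_{j\ell}^m$ produces $\Gamma_{j\ell}^m(\bar{\eta}_i - \bar{\eta}_{i'})$. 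Since $\bar{\eta}_i \neq \bar{\eta}_{i'}$, this forces $\Gamma_{j\ell}^m = 0$.

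For (b), I apply both Codazzi equations \eqref{C1} and \eqref{C2} to the triple $(X_m, X_j, X_j)$. Using $\Gamma_{mj}^j = 0$ and $\Gamma_{jm}^j = -\Gamma_{jj}^m$, the $\beta$-equation becomes
\[
\nabla^{\perp}_{X_m}\bar{\eta}_i + \Gamma_{jj}^m(\bar{\eta}_{i'} - \bar{\eta}_i) = \omega(X_m),
\]
and the $\psi$-equation becomes
\[
X_m(\theta_i) + \Gamma_{jj}^m(\theta_{i'} - \theta_i) = -\langle \omega(X_m), \bar{\eta}_i\rangle.
\]
Taking the $\langle \cdot,\bar{\eta}_i\rangle$-product of the first and adding to the second, the $\omega(X_m)$-contributions cancel; substituting the semi-parallel identity $\langle \bar{\eta}_i, \bar{\eta}_{i'}\rangle = -\theta_i - \theta_{i'}$ from Lemma \ref{condition} collapses what remains into $\tfrac{1}{2}X_m(f_i) = \Gamma_{jj}^m f_i$. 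Since we are in case \textbf{(i)}, $f_i \neq 0$, and thus $\Gamma_{jj}^m = X_m(f_i)/(2f_i)$, visibly independent of $j$. Summing over $m$ and rewriting via $-\grad^{*}(\log |f_i|^{-1/2}) = (2f_i)^{-1}\grad^{*}f_i$ yields the mean curvature vector in the stated form.

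The main obstacle is identifying the correct linear combination of the two Codazzi equations that eliminates both $\omega(X_m)$ and the a priori unknown $\nabla^{\perp}_{X_m}\bar{\eta}_i$ while collapsing the remaining algebra into the single scalar $f_i = \Vert\bar{\eta}_i\Vert^{2}+2\theta_i$. The semi-parallel identity from Lemma \ref{condition} is precisely what makes this collapse possible; without it, the combination $\Vert\bar{\eta}_i\Vert^{2}+2\theta_i$ would not emerge as a common factor, and one would be unable to solve for $\Gamma_{jj}^m$ in $j$-independent form.
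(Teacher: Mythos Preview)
Your proof is correct and follows essentially the same approach as the paper. The only organizational difference is that the paper treats the off-diagonal and diagonal cases uniformly by applying \eqref{C1} and \eqref{C2} to the single triple $(X_u,X_z,X_v)$ with arbitrary $X_u,X_v\in E_{\eta_i}$ and $X_z\in E_{\eta_i}^{\perp}$, obtaining $\langle \nabla^{*}_{X_u}X_v,X_z\rangle^{*}(\Vert\bar{\eta}_i\Vert^{2}+2\theta_i)=\tfrac{1}{2}\langle X_u,X_v\rangle^{*}X_z(\Vert\bar{\eta}_i\Vert^{2}+2\theta_i)$ in one stroke, whereas you split into cases and handle the off-diagonal part~(a) with the alternative triple $(X_j,X_m,X_\ell)$, which has the minor advantage of not invoking \eqref{C2} or the hypothesis $f_i\neq 0$ there.
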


\begin{proof}
    Let $X_u,X_v \in E_{\eta_i}$ and $X_z \in E_{\eta_i}^{\perp}$. The conformal Codazzi equation $$(\nabla^{\perp}_{X_u}\beta)(X_z,X_v)-(\nabla^{\perp}_{X_z}\beta)(X_u,X_v)=\omega((X_u \wedge X_z)X_v)$$ yields \begin{equation}\label{cd3}
        \langle \nabla^{*}_{X_u}X_v,X_z\rangle^{*}(\bar{\eta}_i-\bar{\eta}_z)-\langle X_u,X_v\rangle^{*}\nabla^{\perp}_{X_z}\bar{\eta}_i=-\langle X_u,X_v\rangle^{*}\omega(X_z).
    \end{equation} On the other hand, equation $$(\nabla^{*}_{X_u}\psi)(X_z,X_v)-(\nabla^{*}_{X_z}\psi)(X_u,X_v)=\langle \omega(X_z), \beta(X_u,X_v)\rangle-\langle \omega(X_u), \beta(X_z,X_v)\rangle$$ yields 
    \begin{equation}\label{cd4}
        \langle \nabla^{*}_{X_u}X_v,X_z\rangle^{*}(\theta_i-\theta_z)-\langle X_u,X_v\rangle^{*}X_z(\theta_i)=\langle X_u,X_v\rangle^{*}\langle \omega(X_z), \bar{\eta}_i\rangle.
    \end{equation}

    Taking the inner product of \eqref{cd3} with $\bar{\eta}_i$, adding it with \eqref{cd4} and using the Lemma \ref{condition} we get $$\langle \nabla^{*}_{X_u}X_v,X_z\rangle^{*}(\Vert\bar{\eta}_i\Vert^{2}+2\theta_i)-\langle X_u,X_v\rangle^{*}\frac{X_z(\Vert\bar{\eta}_i\Vert^{2}+2\theta_i)}{2}=0,$$ which is equivalent to $$\langle \nabla^{*}_{X_u}X_v,X_z\rangle^{*}=-\langle X_u,X_v\rangle^{*}\langle \grad^{*}(\log |\Vert\bar{\eta}_i\Vert^{2}+2\theta_i|^{-\frac{1}{2}}),X_z\rangle^{*}.$$\end{proof}



    \begin{lemma}\label{integrability}
        $E_{\eta_i}^{\perp}$ is an integrable distribution for all $1 \le i \le k.$ 
    \end{lemma}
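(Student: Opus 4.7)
The plan is to decompose $E_{\eta_i}^{\perp}=\bigoplus_{j\neq i}E_{\eta_j}$ and verify involutivity by checking $[X,Y]\in E_{\eta_i}^{\perp}$ for $X,Y$ in a basis of this direct sum. If $X_u,X_v\in E_{\eta_j}$ for a single $j\neq i$, the conclusion is automatic from Lemma \ref{umbilical}: any umbilical distribution is involutive, since the symmetric ``umbilical'' term in $\nabla^{*}|_{E_{\eta_j}\times E_{\eta_j}}$ cancels in the antisymmetric combination $\nabla^{*}_{X_u}X_v-\nabla^{*}_{X_v}X_u$, hence $[X_u,X_v]\in E_{\eta_j}\subseteq E_{\eta_i}^{\perp}$. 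The substantive case is $X_u\in E_{\eta_j}$, $X_v\in E_{\eta_\ell}$ with $i,j,\ell$ pairwise distinct, where one must show $A:=\langle[X_v,X_u],X_z\rangle^{*}=0$ for every $X_z\in E_{\eta_i}$.

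For this I would apply both conformal Codazzi equations \eqref{C1} and \eqref{C2} to the triple $(X_u,X_v,X_z)$. Since these three vectors lie in pairwise orthogonal eigenspaces, $\omega((X_u\wedge X_v)X_z)=0$ and the off-diagonal entries of $\beta$ and $\psi$ vanish, so expanding the covariant derivatives via the simultaneous diagonalization of $\beta$ and $\psi$ reduces the two Codazzi identities to
\begin{align*}
A\,\bar{\eta}_i-B\,\bar{\eta}_\ell+C\,\bar{\eta}_j&=0,\\
A\,\theta_i-B\,\theta_\ell+C\,\theta_j&=0,
\end{align*}
where $B:=\langle\nabla^{*}_{X_u}X_z,X_v\rangle^{*}$ and $C:=\langle\nabla^{*}_{X_v}X_z,X_u\rangle^{*}$. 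Taking the inner product of the first equation with $\bar{\eta}_i$, simplifying through Lemma \ref{condition} (i.e.\ $\langle\bar{\eta}_p,\bar{\eta}_q\rangle=-\theta_p-\theta_q$), and adding the second yields
\[
A\bigl(\Vert\bar{\eta}_i\Vert^{2}+\theta_i\bigr)+\theta_i(B-C)=0. \qquad (\star)
\]

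The decisive observation, which is purely metric-compatibility and independent of the Codazzi equations, is the identity $A=B-C$: differentiating $\langle X_z,X_v\rangle^{*}=0$ and $\langle X_z,X_u\rangle^{*}=0$ gives $B=-\langle\nabla^{*}_{X_u}X_v,X_z\rangle^{*}$ and $C=-\langle\nabla^{*}_{X_v}X_u,X_z\rangle^{*}$, whence $A=\langle\nabla^{*}_{X_v}X_u-\nabla^{*}_{X_u}X_v,X_z\rangle^{*}=B-C$. Substituting $B-C=A$ into $(\star)$ collapses it to $A(\Vert\bar{\eta}_i\Vert^{2}+2\theta_i)=0$. In Case (i) this immediately yields $A=0$, since $\Vert\bar{\eta}_i\Vert^{2}+2\theta_i\neq 0$. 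In Case (ii), let $i_0$ denote the unique index with $\Vert\bar{\eta}_{i_0}\Vert^{2}+2\theta_{i_0}=0$ (Lemma \ref{possibilities}): if $i\neq i_0$ the previous argument applies verbatim, while if $i=i_0$ the relation $(\star)$ becomes vacuous, but repeating the scheme with inner products against $\bar{\eta}_j$ and $\bar{\eta}_\ell$ produces $C(\Vert\bar{\eta}_j\Vert^{2}+2\theta_j)=0$ and $B(\Vert\bar{\eta}_\ell\Vert^{2}+2\theta_\ell)=0$, forcing $B=C=0$ (hence $A=B-C=0$) because $j,\ell\neq i_0$. The main potential obstacle would have been the degeneracy of the underlying $3\times 3$ homogeneous system in $(A,B,C)$ in configurations where the augmented vectors $(\bar{\eta}_m,\theta_m)\in T^{\perp}M\oplus\mathbb{R}$ are linearly dependent, but the metric-compatibility identity $A=B-C$ cleanly sidesteps that.
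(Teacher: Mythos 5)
Your proof is correct and follows essentially the same route as the paper's: both conformal Codazzi equations applied to a triple drawn from three distinct eigenbundles, contracted against a Moebius principal normal and combined via Lemma \ref{condition} so that the coefficient $\Vert\bar{\eta}\Vert^{2}+2\theta$ appears, which is then nonzero by Lemma \ref{possibilities}. Your explicit treatment of the degenerate index $i_0$ (contracting against $\bar{\eta}_j$ and $\bar{\eta}_\ell$ when $i=i_0$) is in fact slightly more careful than the paper's argument, which contracts only against $\bar{\eta}_v$ and tacitly assumes $\Vert\bar{\eta}_v\Vert^{2}+2\theta_v\ne 0$; note also that your same-eigenbundle case follows directly from your displayed system by setting $\ell=j$, so the appeal to Lemma \ref{umbilical} there is not strictly needed.
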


\begin{proof}
    Let $X_u,X_v \in E_{\eta_i}^{\perp}$ and $X_z \in E_{\eta_i}$. The conformal Codazzi equation $$(\nabla^{\perp}_{X_u}\beta)(X_v,X_z)-(\nabla^{\perp}_{X_v}\beta)(X_u,X_z)=\omega((X_u\wedge X_v)X_z)$$ gives $$\langle \nabla^{*}_{X_u}X_v,X_z\rangle^{*}(\bar{\eta}_v-\bar{\eta}_i)=\langle \nabla^{*}_{X_v}X_u,X_z\rangle^{*}(\bar{\eta}_u-\bar{\eta}_i).$$

    If $\bar{\eta}_v=\bar{\eta}_u,$ then  $[X_u,X_v]\in E_{\eta_i}^{\perp}$. If $\bar{\eta}_v \ne \bar{\eta}_u$, then taking the inner product of the preceding equation with $\bar{\eta}_v$ and using Lemma \ref{condition} we obtain \begin{equation}\label{cd5}
    \langle \nabla^{*}_{X_u}X_v,X_z\rangle^{*}(\Vert\bar{\eta}_v\Vert^{2}+\theta_i+\theta_v)=\langle \nabla^{*}_{X_v}X_u,X_z\rangle^{*}(\theta_i-\theta_u).\end{equation} However, equation $$(\nabla^{*}_{X_u}\psi)(X_v,X_z)=(\nabla^{*}_{X_v}\psi)(X_u,X_z)$$ yields \begin{equation}\label{cd6}
        \langle \nabla^{*}_{X_u}X_v,X_z\rangle^{*}(\theta_v-\theta_i)=\langle \nabla^{*}_{X_v}X_u,X_z\rangle^{*}(\theta_u-\theta_i).
    \end{equation}

    Adding \eqref{cd5} and \eqref{cd6} we have $\langle \nabla^{*}_{X_u}X_v,X_z\rangle^{*}(\Vert\bar{\eta}_v\Vert^{2}+2\theta_v)=0$, which implies $$\langle \nabla^{*}_{X_u}X_v,X_z\rangle^{*}=0.$$ Therefore, $E_{\eta_i}^{\perp}$ is integrable.
\end{proof}

	Recall that an orthogonal net $\mathcal{E}=(E_i)_{i=1}^{k}$ on a Riemannian manifold $M^{n}$ is called a \emph{twisted product net} if $E_i$ is umbilical and $E_i^{\perp}$ is integrable for all $i=1, \ldots, k$. From Lemmas \ref{umbilical} and \ref{integrability} it follows that $\mathcal{E}=\{E_{\eta_1}, \ldots, E_{\eta_k}\}$ is a twisted product net on $(M^{n}, \langle \,\,,\,\rangle^{*})$.

 In Corollary 1 of \cite{decomposition} was obtained the following decomposition theorem  for manifolds endowed with twisted product nets.

\begin{theorem}[\cite{decomposition}]\label{decomposition}
    Let $\mathcal{E}=(E_i)_{i=1}^{k}$ be a TP-net on a Riemannian manifold $M.$ Then for each $x \in M$ there exists a local product representation $\Phi:\prod_{i=1}^{k}M_i \to U$ of $\mathcal{E}$ with $x \in U \subset M$, which is an isometry with respect to a twisted product metric on $\prod_{i=1}^{k}M_i.$
\end{theorem}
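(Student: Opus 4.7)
The plan is to produce local coordinates in which the TP-net becomes the coordinate net of a product, and then use umbilicity to force the metric into twisted product form.

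First I would observe that each $E_i$ is itself integrable. For $X, Y \in \Gamma(E_i)$, umbilicity gives $(\nabla_X Y)^{E_i^\perp} = g(X, Y) H_i$ for some $H_i \in \Gamma(E_i^\perp)$, and the symmetry of the right-hand side in $X, Y$ forces $[X, Y] = \nabla_X Y - \nabla_Y X$ to have vanishing $E_i^\perp$-component. Combining this with the hypothesized integrability of each $E_i^\perp$, the combinatorial key is then that for every subset $I \subseteq \{1, \ldots, k\}$ the partial sum $F_I := \bigoplus_{i \in I} E_i$ is integrable: given $X \in \Gamma(E_i)$, $Y \in \Gamma(E_j)$ with $i, j \in I$ and any $\ell \notin I$, one has $X, Y \in \Gamma(E_\ell^\perp)$, so $[X, Y] \in \Gamma(E_\ell^\perp)$; hence $[X, Y] \in \Gamma\bigl(\bigcap_{\ell \notin I} E_\ell^\perp\bigr) = \Gamma(F_I)$. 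This simultaneous integrability of all partial sums permits a standard iterated Frobenius argument (applied to pairs of complementary integrable distributions) to produce local coordinates $(u^1, \ldots, u^k)$ around $x$, with $u^i = (u^i_1, \ldots, u^i_{n_i})$ in $\mathbb{R}^{n_i}$, satisfying $E_i = \mathrm{span}\{\partial/\partial u^i_a\}_{a=1}^{n_i}$.

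Next I would exploit umbilicity a second time to factor the metric. In these coordinates the orthogonality of the net forces $g = \sum_i g^i_{ab}(u)\, du^i_a \, du^i_b$ to be block diagonal. With $a, b$ in block $i$ and $c$ in a different block $j$, the block-diagonal form reduces the Christoffel symbol to $\Gamma^c_{ab} = -\tfrac{1}{2}(g^j)^{cd} \partial_d g^i_{ab}$. Umbilicity of $E_i$ forces $\Gamma^c_{ab} = g^i_{ab}(H_i)^c$ with $(H_i)^c$ independent of $a, b$, so $\partial_d \log g^i_{ab}$ depends only on the pair $(i, d)$ and not on $(a, b)$. Integration in the simply connected coordinate neighborhood then yields a factorization $g^i_{ab}(u) = \rho_i^2(u)\, \tilde g^i_{ab}(u^i)$ in which $\rho_i$ may depend on all coordinates while $\tilde g^i_{ab}$ depends only on the block-$i$ coordinates.

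Finally, taking $M_i$ to be the image of $u^i$ equipped with the metric $\tilde g^i$ and defining $\Phi : \prod_i M_i \to U$ as the inverse of the coordinate chart, one finds $\Phi^\ast g = \sum_i \rho_i^2 \tilde g^i$, which is a twisted product metric; thus $\Phi$ is the desired local product representation. The main obstacle is the combinatorial step producing adapted coordinates: for two complementary integrable distributions this is classical, but here $k$ distributions must be handled simultaneously, and it is precisely the observation that every partial sum $F_I$ is integrable (which in turn rests on umbilicity forcing the integrability of each $E_i$) that makes the iterated Frobenius construction go through.
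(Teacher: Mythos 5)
This theorem is not proved in the paper at all: it is quoted verbatim as Corollary 1 of the cited reference \cite{decomposition} (Meumertzheim--Reckziegel--Schaaf), so there is no in-paper argument to compare against. Judged on its own, your proof is essentially the standard argument for this decomposition result and is correct in outline. The two key steps are both sound: (1) umbilicity of $E_i$ forces integrability of $E_i$ by symmetry of the second fundamental form of the distribution, and combined with integrability of each $E_\ell^{\perp}$ this gives integrability of every partial sum $F_I=\bigoplus_{i\in I}E_i$ (since $[X,Y]\in\bigcap_{\ell\notin I}E_\ell^{\perp}=F_I$), which is exactly the hypothesis needed for the iterated Frobenius construction of adapted coordinates; (2) in those coordinates the orthogonality of the net makes $g$ block diagonal, and umbilicity pins down the mixed Christoffel symbols so that the transverse logarithmic derivative of the block metric is independent of the indices within the block.

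One small repair is needed in step (2): writing $\partial_d\log g^i_{ab}$ is not legitimate for off-diagonal entries $g^i_{ab}$, $a\neq b$, which may vanish on a large set. The identity you actually have is the linear relation $\partial_d g^i_{ab}=\lambda^i_d\, g^i_{ab}$ with $\lambda^i_d$ depending only on $(i,d)$; applying it to a nowhere-vanishing diagonal entry, say $g^i_{11}>0$, gives $\lambda^i_d=\partial_d\log g^i_{11}$, and then $\partial_d\bigl(g^i_{ab}/g^i_{11}\bigr)=0$ for every transverse $d$, so $g^i_{ab}=\rho_i^{2}\,\tilde g^i_{ab}(u^i)$ with $\rho_i^{2}:=g^i_{11}$. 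This avoids any integration of a possibly non-exact form and removes the appeal to simple connectivity. With that adjustment the factorization, and hence the local product representation $\Phi$, follows exactly as you describe.
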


By Theorem \ref{decomposition}, at each $x\in M^{n}$ there exists an open subset $U$ of $M^{n}$ containing $x$ and a product representation $\Phi:\prod_{i=1}^{k}M_i \to U$ of $\mathcal{E}$ that is an isometry with respect to a twisted product metric 
\begin{equation}\label{mobius-metric1}
	\langle \,\,,\,\rangle=\sum_{i=1}^{k}\rho_i^{2} \pi_i^{*} \langle\,\,,\,\rangle_{i}
\end{equation} of $\prod_{i=1}^{k}M_i$, for some positive \emph{twisting functions} $\rho_i \in C^{\infty}\left(\prod_{i=1}^{k}M_i\right)$, $1 \le i \le k,$ where $\pi_i:\prod_{j=1}^{k}M_j\to M_i$ denotes the canonical projection.

Let $(E_i)_{i=1}^{k}$ be the product net of $\prod_{i=1}^{k}M_i$, that is, $E_i(x)={\tau_i^{x}}_{*}(T_{x_i}M_i)$ for all $x=(x_1, \ldots,x_k)\in \prod_{i=1}^{k}M_i$, where $\tau_i^{x}:M_i\to \prod_{i=1}^{k}M_i$ is the standard inclusion. As observed in Proposition 2-(c) of \cite{decomposition}, $E_i$ is an umbilical distribution with mean curvature vector field $-(\grad(\log \circ \rho_i))_{E_i^{\perp}}$, where $\grad$ is the gradient with respect to $\langle \,\,,\,\rangle.$

We claim that there exist orthogonal coordinates $(x_1,\ldots,x_{n})$ on $\prod_{i=1}^{k}M_i$ and smooth functions $r_i=r_i(x_{m_1+\ldots m_{i-1}+1}, \ldots, x_{m_1+\ldots+m_i})$ such that $$\rho_i=r_i|\Vert\bar{\eta}_i\Vert^{2}+2\theta_i|^{-\frac{1}{2}} \circ \Phi,$$ for each $1 \le i \le k.$

Indeed, we proved in Lemma \ref{umbilical} that $E_{\eta_i}$ is an umbilical distribution with mean curvature vector field $-(\grad^{*}(\log |\Vert\bar{\eta}_i\Vert^{2}+2\theta_i|^{-\frac{1}{2}}))_{E_{\eta_i}^{\perp}}.$ Since $\Phi$ is a product representation, that is, $\Phi_{*}E_i(x)=E_{\eta_i}(\Phi(x))$, and an isometry, we have that \begin{align*}
    -\langle \grad^{*}(\log |\Vert\bar{\eta}_i\Vert^{2}+2\theta_i|^{-\frac{1}{2}}), X_\ell\rangle^{*}&=\langle \nabla^{*}_{X_a}X_a, X_\ell\rangle^{*}\\
    &=\langle\nabla^{*}_{{\tau_{i}^{x}}_{*}\bar{X}_a}\Phi_{*}{\tau_{i}^{x}}_{*}\bar{X}_a, \Phi_{*}{\tau_{i}^{x}}_{*}\bar{X}_\ell \rangle^{*}\\
    &=\langle\Phi_{*}\nabla_{{\tau_{i}^{x}}_{*}\bar{X}_a}{\tau_{i}^{x}}_{*}\bar{X}_a, \Phi_{*}{\tau_{i}^{x}}_{*}\bar{X}_\ell \rangle^{*}\\
    &=\langle\nabla_{{\tau_{i}^{x}}_{*}\bar{X}_a}{\tau_{i}^{x}}_{*}\bar{X}_a,{\tau_{i}^{x}}_{*}\bar{X}_\ell \rangle\\
    &=-\langle \grad(\log \circ \rho_i), {\tau_{i}^{x}}_{*}\bar{X}_\ell\rangle,
\end{align*} for all $X_a \in E_{\eta_i}$ and $X_\ell \in E_{\eta_i}^{\perp}.$ Then \begin{align*}
(\Phi_{*} \grad(\log \circ \rho_i))_{(E_{\eta_i}(\Phi(x)))^{\perp}}&=(\grad^{*}(\log |\Vert\bar{\eta}_i \Vert^{2}+2\theta_i|^{-\frac{1}{2}}))_{(E_{\eta_i}(\Phi(x)))^{\perp}}\\
&=(\Phi_{*}\grad(\log \circ |\Vert \bar{\eta}_i\Vert^{2}+2\theta_i|^{-\frac{1}{2}}\circ \Phi))_{(E_{\eta_i}(\Phi(x)))^{\perp}}.
\end{align*}
Since $E_{\eta_1}^{\perp}, \ldots, E_{\eta_k}^{\perp}$ are integrable, it follows that $f$ is locally holonomic on $U$, and hence there exist orthogonal coordinates $(x_1, \ldots, x_n)$ on $\prod_{i=1}^{k}M_i$ such that $$E_{\eta_i}=\spa\left\{\frac{\partial}{\partial x_{m_1+\ldots+m_{i-1}+1}}, \ldots, \frac{\partial}{\partial x_{m_1+\ldots+m_i}}\right\}$$ for all $1 \le i \le k.$ Therefore, $\rho_i=r_i|\Vert\bar{\eta}_i\Vert^{2}+2\theta_i|^{-\frac{1}{2}}\circ \Phi$, for some positive smooth functions $r_i=r_i(x_{m_1+\ldots+m_{i-1}+1}, \ldots, x_{m_1+\ldots+m_i}).$ This proves our claim.

Therefore, omitting the isometry $\Phi$, we can write the metric \eqref{mobius-metric1} as the orthogonal metric $$\langle \cdot, \cdot \rangle^{*}=|\Vert\bar{\eta}_1\Vert^{2}+2\theta_1|^{-1}\sum_{j=1}^{m_1}V_j^{2}dx_j^{2}+\ldots+|\Vert\bar{\eta}_k\Vert^{2}+2\theta_k|^{-1}\sum_{j=m_1+\ldots + m_{k-1}+1}^{n}V_j^{2}dx_j^{2},$$ where for each $1 \le i \le k$, $V_j=r_i \sqrt{\left\langle \frac{\partial}{\partial x_j}, \frac{\partial}{\partial x_j}\right\rangle_i}$ for all $\frac{\partial}{\partial x_j}\in E_{\eta_i}.$

For each $1 \le i \le k$, denote by $$v_j=|\Vert\bar{\eta}_i\Vert^{2}+2\theta_i|^{-\frac{1}{2}}V_j,$$ for all $m_1+\ldots+m_{i-1}+1 \le j \le m_1+\ldots+m_i.$ Then, we can write $$\langle \cdot, \cdot \rangle^{*}=\sum_{j=1}^{n}v_j^{2}dx_j^{2}$$ and $\{X_j=v_j^{-1}\frac{\partial}{\partial x_j}: 1 \le j \le n\}$ is an orthonormal frame on $(M^{n}, \langle \cdot, \cdot \rangle^{*})$. By the conformal Gauss equation and the Lemma \ref{condition}, $f$ is Moebius semi-parallel if and only if $$K^{*}(X_a,X_b)=0$$ for all $X_a \in E_{\eta_i}$ and $X_b \in E_{\eta_j}$ with $1 \le i \ne j \le k.$ We also set $h_{ij}=\frac{1}{v_i}\frac{\partial v_j}{\partial x_i}$ for all $1 \le i \ne j \le n.$

Let $E_{\eta_1}, \ldots, E_{\eta_t}$ be line eigenbundle distributions and let $E_{\eta_{t+1}}, \ldots, E_{\eta_k}$ be distributions with dimension at least two. (When $t=0$ then all eigenbundle distributions $E_{\eta_1}, \ldots, E_{\eta_k}$ have dimension at least two.) The Moebius metric can be written as follows: 
$$\langle \cdot, \cdot \rangle^{*}=\sum_{j=1}^{t}|\Vert\bar{\eta}_j\Vert^{2}+2\theta_j|^{-1}dx_j^{2}+ \sum_{i=t+1}^{k}\sum_{j=m_{t+1}+\ldots+m_{i-1}+t+1}^{m_{t+1}+\ldots+m_i+t}|\Vert\bar{\eta}_i\Vert^{2}+2\theta_i|^{-1}V_j^{2}dx_j^{2}.$$

Given arbitrary choices of $1 \le i \le t$ and $t+1 \le r \le k$, fix the coordinates in $M_{\ell}$ for all $1 \le \ell \le k$ with $\ell \ne i,r.$ Then $M_i\times M_r$ is a totally geodesic submanifold of $\prod_{s=1}^{k}M_s$ whose induced metric is $$\langle \cdot, \cdot \rangle^{*}_{ir}=|\Vert\bar{\eta}_i\Vert^{2}+2\theta_i|^{-1}dx_i^{2}+|\Vert\bar{\eta}_r||^{2}+2\theta_r|^{-1}\sum_{j=m_{t+1}+\ldots+m_{r-1}+t+1}^{m_{t+1}+\ldots+m_r+t}V_j^{2}dx_j^{2}.$$

From Lemma \ref{geodesic} it follows that $\langle \cdot, \cdot \rangle^{*}_{ir}$ is a warped product metric with warping function $\mu:=|\Vert\bar{\eta}_r\Vert^{2}+2\theta_r|^{-\frac{1}{2}}.$ Since $f$ is Moebius semi-parallel, then $\Hess\, \mu=0$ where $\Hess$ is computed with respect to $|\Vert\bar{\eta}_i\Vert^{2}+2\theta_i|^{-1}dx_i^{2}.$ Therefore, $\frac{\partial h_{ij}}{\partial x_i}=0.$ However, $$h_{ij}=-\frac{1}{2}V_j\frac{|\Vert\bar{\eta}_i\Vert^{2}+2\theta_i|^{\frac{1}{2}}}{|\Vert\bar{\eta}_r\Vert^{2}+2\theta_r|^{\frac{3}{2}}}\frac{\partial}{\partial x_i}|\Vert\bar{\eta}_r\Vert^{2}+2\theta_r|,$$ thus $\frac{\partial}{\partial x_i}|\Vert\bar{\eta}_r\Vert^{2}+2\theta_r|=0$ in $M_i \times M_r.$ Since the coordinates fixed in $M_\ell$ for all $1 \le \ell \le k$ with $\ell \ne i,r$ are arbitrary, it follows that $\frac{\partial}{\partial x_i}|\Vert\bar{\eta}_r\Vert^{2}+2\theta_r|=0$ in $\prod_{s=1}^{k} M_s$. However, $i$ and $r$ are arbitrarily chosen in $\{1,\ldots, t\}$ and $\{t+1, \ldots,k\}$, then \begin{equation}\label{eq1}
\frac{\partial}{\partial x_i}|\Vert\bar{\eta}_r\Vert^{2}+2\theta_r|=0
\end{equation} in $\prod_{s=1}^{k}M_s$ for all $1 \le i \le t$ and all $t+1 \le r \le k$.

By Lemma \ref{geodesic}, \begin{equation}\label{eq2}\frac{\partial}{\partial x_i}|\Vert\bar{\eta}_r\Vert^{2}+2\theta_r|=0 \quad \mbox{in} \quad \prod_{s=1}^{k}M_s\end{equation} for all $t+1 \le r \le k$ and all $m_{t+1}+\ldots+m_{r-1}+t+1 \le i \le m_{t+1}+\ldots+m_r+t$.

Now we prove that $\frac{\partial}{\partial x_i}|\Vert\bar{\eta}_r\Vert^{2}+2\theta_r|=0$ in $\prod_{s=1}^{k}M_s$ for all $t+1 \le r \ne j \le k$ and all $m_{t+1}+\ldots + m_{j-1}+t+1 \le i \le m_{t+1}+\ldots+m_j+t$. In fact, choose $ t+1\le j \ne r\le k$ and fix arbitrary coordinates in $M_{\ell}$ for all $\ell \ne r,j$, then $M_j \times M_r$ is a totally geodesic submanifold of $\prod_{s=1}^{k}M_s$ with induced metric $$\langle \cdot, \cdot \rangle_{jr}^{*}=|\Vert\bar{\eta}_j\Vert^{2}+2\theta_j|^{-1}\sum_{\ell=m_{t+1}+\ldots+m_{j-1}+t+1}^{m_{t+1}+\ldots+m_{j}+t}V_\ell^{2}dx_{\ell}^{2}+|\Vert\bar{\eta}_r\Vert^{2}+2\theta_r|^{-1}\sum_{\ell=m_{t+1}+\ldots+m_{r-1}+t+1}^{m_{t+1}+\ldots +m_{r}+t}V_\ell^{2}dx_{\ell}^{2}.$$

Given arbitrary choices of $m_{t+1}+\ldots+m_{j-1}+t+1 \le i \le m_{t+1}+\ldots+m_{j}+t$, fix the coordinates $\bar{X}_{i}=(x_{m_{t+1}+\ldots+m_{j-1}+t+1}, \ldots, x_{i-1},x_{i+1}, \ldots, x_{m_{t+1}+\ldots+m_{j}+t})$ in $M_j$. Then there exists an open interval $I^{\bar{X}_i}_j$ such that $I^{\bar{X}_{i}}_j\times M_r$ is a totally geodesic submanifold of the product $M_j\times M_r$ whose metric is given by $$\langle \cdot, \cdot \rangle_{i r}^{*}=|\Vert\bar{\eta}_j\Vert^{2}+2\theta_j|^{-1}V_i^{2}dx_i^{2}+|\Vert\bar{\eta}_r\Vert^{2}+2\theta_r|^{-1}\sum_{\ell=m_{t+1}+\ldots+m_{r-1}+t+1}^{m_{t+1}+\ldots+m_r+t}V_\ell^{2}dx_\ell^{2}.$$
Once again, this metric is warped. Repeating the procedure, one can show $\frac{\partial}{\partial x_i}|\Vert\bar{\eta}_r\Vert^{2}+2\theta_r|=0$ in $I_{j}^{\bar{X}_i}\times M_r.$ However, for each $m_{t+1}+\ldots+m_{j-1}+t+1 \le i \le m_{t+1}+\ldots+m_j+t$ we have $M_j=\cup_{{\bar{X}_i}}I_j^{\bar{X}_i}$, therefore $\frac{\partial}{\partial x_i}|\Vert\bar{\eta}_r\Vert^{2}+2\theta_r|=0$ in $M_j \times M_r$. Actually, given the arbitrariness of the choices of $j$ and $r$, we conclude that \begin{equation}\label{eq3}
    \frac{\partial}{\partial x_i}|\Vert\bar{\eta}_r\Vert^{2}+2\theta_r|=0
\end{equation} in $\prod_{s=1}^{k}M_s$ for all $t+1 \le r \ne j \le k$ and all $m_{t+1}+\ldots+m_{j-1}+t+1 \le i \le m_{t+1}+\ldots + m_j+t.$
 Therefore, from equations \eqref{eq1}, \eqref{eq2} and \eqref{eq3}, we see that the functions $\Vert\bar{\eta}_r\Vert^{2}+2\theta_r$, $t+1 \le r \le k,$  are constant in $\prod_{s=1}^{k}M_s$.

 By Lemma \ref{condition} and the conformal Gauss equation, the scalar curvature $s^{*}$ of the Moebius metric is a constant function, because it is a sum of the functions $\Vert\bar{\eta}_r\Vert^{2}+2\theta_r$, $t+1 \le r \le k.$


\textbf{Case (ii)}. \label{caseii} Let $E_{\eta_1}, \ldots, E_{\eta_t}$ be line eigenbundles, and let $E_{\eta_{t+1}}, \ldots, E_{\eta_k}$ be eigenbundles with dimension at least two. (When $t=0$ then all eigenbundles have dimension at least two). Without loss of generality, we have two subcases to consider in $U.$

\begin{description}
    \item[a)] $\Vert\bar{\eta}_{t+1}\Vert^{2}+2\theta_{t+1}=0$ and $\Vert\bar{\eta}_i\Vert^{2}+2\theta_i >0$ for all $1 \le i \ne t+1 \le k.$
    \item[b)] $\Vert \bar{\eta}_1\Vert^{2}+2\theta_1=0$ and $\Vert\bar{\eta}_i\Vert^{2}+2\theta_i > 0$ for all $2 \le i \le k.$ 
\end{description}

\textbf{Subcase \textbf{a)}}. Then Lemma \ref{umbilical} implies that $E_{\eta_i}$ is totally umbilical with mean curvature vector field $-(\grad^{*}(\log (\Vert\bar{\eta}_i\Vert^{2}+2\theta_i)^{-\frac{1}{2}}))_{E_{\eta_i}^{\perp}}$ for all $1 \le i \ne t+1 \le k.$ In addition $E_{\eta_{t+1}}$ is umbilical with respect to $\langle \cdot, \cdot\rangle^{*}$, because $\dim E_{\eta_{t+1}} \geq 2.$ Lemma \ref{integrability} implies that $E_{\eta_i}^{\perp}$ is integrable for all $1 \le i \le k.$ In particular, $(E_{\eta_i})_{i=1}^{k}$ is a twisted product net on $(M^{n}, \langle \cdot, \cdot \rangle^{*})$. By Theorem \ref{decomposition}, $M^{n}$ is locally a twisted product metric \begin{equation}
	\langle \,\,,\,\rangle^{*}=\sum_{i=1}^{k}\rho_i^{2} \pi_i^{*} \langle\,\,,\,\rangle_{i}
\end{equation} in $\prod_{i=1}^{k}M_i$, for some positive \emph{twisting functions} $\rho_i \in C^{\infty}\left(\prod_{i=1}^{k}M_i\right)$, $1 \le i \le k.$ Argumenting as before, one can show that there exist locally orthogonal coordinates $(x_1, \ldots,x_n)$ in $\prod_{i=1}^{k}M_i$ such that the Moebius metric can be written as \begin{align*}\langle \cdot, \cdot \rangle^{*}=\sum_{i=1}^{t}(\Vert\bar{\eta}_i\Vert^{2}+2\theta_i)^{-1}dx_i^{2}&+\sum_{i=t+2}^{k} \sum_{j=m_{t+1}+\ldots+m_{i-1}+t+1}^{m_{t+1}+\ldots+m_{i}+t}(\Vert\bar{\eta}_i\Vert^{2}+2\theta_i)^{-1}V_j^{2}dx_j^{2}\\&+\varphi^{2}\sum_{j=t+1}^{m_{t+1}+t}V_j^{2}dx_j^{2}\end{align*} where $\varphi \in C^{\infty}(\prod_{i=1}^{k}M_i)$ and $V_j=\sqrt{\langle \frac{\partial}{\partial x_j}, \frac{\partial}{\partial x_j}\rangle_i}$ for all $1 \le i \le k$ and $\frac{\partial}{\partial x_j}\in E_{\eta_i}$. Repeating the proof of {case (i)}, one can show that the functions $\Vert\bar{\eta}_i\Vert^{2}+2\theta_i$, $t+2 \le i \le k$, are constants in $\prod_{i=1}^{k}M_i.$ By Lemma \ref{condition} and the conformal Gauss equation, we obtain that the Moebius scalar curvature $s^{*}$ of $f$ is constant in $\prod_{i=1}^{k}M_i.$

\textbf{Subcase b)}. The Lemma \ref{umbilical} shows that $E_{\eta_i}$ is totally umbilical with mean curvature vector field $-(\grad^{*}(\log(\Vert\bar{\eta}_i\Vert^{2}+2\theta_i)^{-\frac{1}{2}}))_{E_{\eta_i}^{\perp}}$ for all $2 \le i \le k$, and $E_{\eta_1}$ is integrable because it is one-dimensional. Furthermore, the Lemma \ref{integrability} shows that $E_{i}^{\perp}$ is integrable for all $1 \le i \le k.$ It is well-known that $f$ is locally holonomic, that is, each point $x\in M^{n}$ lies in an open neighborhood $U\subset M^{n}$ where there exists an orthogonal coordinate system $(x_1, \ldots, x_n)$ such that $E_{\eta_i}=\spa\left\{\frac{\partial}{\partial x_{m_1+\ldots+m_{i-1}+1}}, \ldots, \frac{\partial}{\partial x_{m_1+\ldots+m_i}}\right\}$ for all $1 \le i \le k.$ The induced metric by $f$ is $ds^{2}=\sum_{i=1}^{n}\Vert\partial/\partial x_i\Vert^{2}dx_i^{2}$. For each $x_1$ fixed in $U$, consider the $(n-1)$-slice $(x_2, \ldots, x_n)$, which we denote by $S_{x_1}.$ Then $S_{x_1}$ is an embedded totally geodesic hypersurface of $U$ with induced metric $\sum_{i=2}^{n}\Vert{\partial/\partial x_i\Vert^{2}}dx_i^{2}.$ The Moebius metric induced in $S_{x_1}$ is $\langle \,\,,\,\rangle^{*}_{x_1}=\sum_{i=2}^{n}\rho^{2}\Vert\partial/\partial x_i\Vert^{2}dx_i^{2}.$ Since $E_{\eta_i}$ for $2 \le i \le k$ is totally umbilical with integrable connulity there exists a local product representation $\Phi:\prod_{i=2}^{k} M_i \to S_{x_1}$ that is an isometry with respect to a twisted product metric $$\langle \,\,,\,\rangle_{x_1}=\sum_{i=2}^{k}\rho_{i}^{2}\pi_{i}^{*}\langle \,\,,\,\rangle_{i}$$ on $\prod_{i=2}^{k}M_i.$ Argumenting as case (i), one can prove that $$\langle \,\,,\,\rangle^{*}_{x_1}=\sum_{i=2}^{t}(\Vert\bar{\eta}_i\Vert^{2}+2\theta_i)^{-1}dx_i^{2}+\sum_{i=t+1}^{k} \sum_{j=m_{t+1}+\ldots+m_{i-1}+t+1}^{m_{t+1}+\ldots+m_{i}+t}(\Vert\bar{\eta}_i\Vert^{2}+2\theta_i)^{-1}V_j^{2}dx_j^{2},$$ where $V_j=\sqrt{\langle \frac{\partial}{\partial x_j}, \frac{\partial}{\partial x_j}\rangle_i}$ for all $2 \le i \le k$ and $\frac{\partial}{\partial x_j}\in E_{\eta_i}$. Since this formula holds for each $x_1$ fixed in $U$, we conclude that in $U$ the Moebius metric is written as $$\langle \cdot, \cdot \rangle^{*}=v_1^{2}dx_1^{2}+\sum_{i=2}^{t}(\Vert\bar{\eta}_i\Vert^{2}+2\theta_i)^{-1}dx_i^{2}+\sum_{i=t+1}^{k} \sum_{j=m_{t+1}+\ldots+m_{i-1}+t+1}^{m_{t+1}+\ldots+m_{i}+t}(\Vert\bar{\eta}_i\Vert^{2}+2\theta_i)^{-1}V_j^{2}dx_j^{2}$$ where $v_1=\rho\Vert\partial/\partial x_1\Vert$. Once again, following similar steps to the case (i), we can prove that $\Vert\bar{\eta}_i\Vert^{2}+2\theta_i$ is constant in $U$ for all $t+1 \le i \le k$. Using the conformal Gauss equation and the Lemma \ref{condition} we obtain that the Moebius scalar curvature is constant in $U.$

	\noindent Universidade de S\~ao Paulo\\
	Instituto de Ci\^encias Matem\'aticas e de Computa\c c\~ao\\
	Av. Trabalhador S\~ao Carlense 400\\
	13566-590 -- S\~ao Carlos\\
	BRAZIL \vspace{5pt}\\
	\texttt{mateusrodrigues@alumni.usp.br} 
	
\end{document}